\newcommand{\PP}{\mathbb{P}}
\newcommand{\OO}{\mathcal{O}}
\newcommand{\FF}{\mathcal{F}}
\newcommand{\cB}{\mathcal{B}}
\newcommand{\GG}{\mathcal{G}}
\newcommand{\ZZ}{\mathbb{Z}}
\newcommand{\bG}{\mathbb{G}}
\newcommand{\cU}{\mathcal{U}}
\newcommand{\PGL}{\operatorname{PGL}}
\newcommand{\DD}{\mathcal{D}}
\newcommand{\codim}{\operatorname{codim}}
\newcommand{\Sym}{\operatorname{Sym}}
\theoremstyle{plain}
\newtheorem{lemma}{Lemma}[section]
\newtheorem*{theorem*}{Theorem}
\newtheorem*{lemma*}{Lemma}
\newtheorem*{proposition*}{Proposition}
\newtheorem*{conjecture*}{Conjecture}
\newtheorem*{corollary*}{Corollary}
\newtheorem*{problem*}{Problem}
\newtheorem{theorem}[lemma]{Theorem}
\newtheorem{corollary}[lemma]{Corollary}
\newtheorem{proposition}[lemma]{Proposition}
\newtheorem{fact}[lemma]{Fact}
\theoremstyle{definition}
\newtheorem{definition}[lemma]{Definition}
\newtheorem{example}[lemma]{Example}
\newtheorem{remark}[lemma]{Remark}
\newtheorem{notation}[lemma]{Notation}
\begin{document}

\title{Clustered families and applications to Lang-type conjectures}
\author[I. Coskun]{Izzet Coskun}
\address{Department of Mathematics, Statistics and CS \\University of Illinois at Chicago, Chicago, IL 60607}
\email{coskun@math.uic.edu}

\author[E. Riedl]{Eric Riedl}
\address{Department of Mathematics \\ University of Notre Dame, Notre Dame, IN 46556}
\email{ebriedl@nd.edu}

\subjclass[2010]{Primary: 32Q45, 14J70. Secondary: 14M15, 14G05 }
\keywords{}
\thanks{During the preparation of this article the first author was partially supported by the   NSF FRG grant  DMS 1664296 and  the second author was partially supported  by the NSF CAREER grant DMS-1945944.}

\begin{abstract}
We introduce and classify $1$-clustered families of linear spaces in the Grassmannian $\bG(k-1,n)$ and give applications to Lang-type conjectures. Let $X \subset \PP^n$ be a very general hypersurface of degree $d$. Let $Z_L$ be the locus of points contained in a line of $X$.  Let $Z_2$ be the locus of points on $X$ that are swept out by lines that meet $X$ in at most $2$ points.
We prove that:
\begin{itemize}
\item  If $d \geq \frac{3n+2}{2}$, then $X$ is algebraically hyperbolic outside $Z_L$. 
\item If $d \geq \frac{3n}{2}$, $X$ contains lines but  no other rational curves. 
\item If  $d \geq \frac{3n+3}{2}$, then  the only points on $X$ that are rationally Chow zero  equivalent to points other than themselves are contained in $Z_2$. 
\item If $d \geq \frac{3n+2}{2}$ and a relative Green-Griffiths-Lang Conjecture holds, then the exceptional locus for $X$  is contained in $Z_2$.
 \end{itemize}

\end{abstract}

\maketitle

\section{Introduction}
Inspired by Faltings' proof of the Mordell Conjecture,  Lang made a series of conjectures relating rational points, hyperbolicity, abelian varieties and rational curves. For example, for a variety $X$ of general type, Lang conjectures \cite{Lang} that there exists a proper subvariety $Z \subset X$ such that
\begin{enumerate}
\item The images of nonconstant maps from rational curves and abelian varieties into $X$ are contained in $Z$.
\item The images of nonconstant entire curves are contained in $Z$.
\item The complement of $Z$ is Kobayashi hyperbolic.
\end{enumerate}
Furthermore, Lang predicts that these geometric conditions control the arithmetic of $X$ and conjectures that if $X$ is defined over a number field $K$ and $L$ is an algebraic extension of $K$, then $X \setminus Z(L)$ is finite. In this paper, we prove algebraic analogues of Lang-type conjectures with an explicit description of $Z$ for hypersurfaces of sufficiently high degree.

A projective variety $Y$ is {\em algebraically hyperbolic} if there exists $\epsilon >0$ such that any reduced, connected curve $C \subset Y$ of geometric genus $g(C)$ satisfies
\begin{equation}\label{eqn-genus}
2g(C) - 2 \geq \epsilon \deg(C).
\end{equation}
Demailly conjectures that for projective varieties algebraic hyperbolicity is equivalent to Kobayashi hyperbolicity \cite{Demaillynew}. 
Let $Z$ be a proper subvariety of $Y$. We say that $Y$ is {\em algebraically hyperbolic outside of $Z$} if inequality  (\ref{eqn-genus}) holds for any curve $C$ not contained in  $Z$.

Let $X$ be a very general  hypersurface of degree $d$ in $\PP^n$, $n \geq 3$. Let $Z_L$ denote the locus of points contained in a line of $X$. Let $Z_i$ denote the locus in $X$ swept out by lines meeting $X$ in at most $i$ points. 
By results of Ein \cite{Ein, Ein2}, Pacienza \cite{Pacienza}, Voisin \cite{Voisin, Voisincorrection} and the authors  \cite{CoskunRiedlhyperbolicity}, a very general hypersurface of degree $d \geq 2n-2 + \max(0, 4-n) $ is algebraically hyperbolic. When $d \leq 2n-3$, every hypersurface of degree $d$ contains lines, so $X$ cannot be algebraically hyperbolic. When the degree of the hypersurface is at least $n+2$, the Lang conjectures predict  the existence of a proper subvariety $Z$ such that $X$ is algebraically  hyperbolic outside of $Z$. In this paper, we develop a new technique for approaching Lang-type conjectures. 
The first application of this technique is finding a proper subvariety containing the exceptional set for algebraic hyperbolicity when the degree of the hypersurface is sufficiently large.

\medskip

\noindent{\bf Theorem \ref{thm-AH}.}
{\it If $d \geq \frac{3n+2}{ 2}$,  then any
curve not lying in $Z_L$ satisfies $2g(C) - 2 \geq \deg C$, where $g(C)$ is the geometric genus of $C$. In particular, $X$ is algebraically hyperbolic outside of $Z_L$.}
\medskip

Pacienza \cite{Pacienza2} proves a similar statement under the assumptiont that $d \geq \max \{ \frac{7n-6}{4}, \frac{3n}{2} \}$. We give a simpler proof that improves the bound with the optimal coefficient $\frac{3n}{2}$, since if $d \leq \frac{3n-2}{2}$, then $X$ contains conics, which necessarily lie outside $Z_L$ by \cite{RiedlYang1}.

Points on $X$ that lie on rational curves are part of Lang exceptional set. Such points are rationally Chow-0 equivalent. Hence, the set of points that are rationally Chow-0 equivalent to another point give another perspective in studying the Lang exceptional sets.  
Chen, Lewis and Sheng \cite{CLS}, inspired by the work of Voisin \cite{Voisin3, Voisin, Voisincorrection} conjecture that for a very general hypersurface $X$ and any point $p \in X$, then the dimension of the space $R_{\PP^1,X, p}$ of points of $X$ rationally Chow-0 equivalent to $p$ (other than $p$ itself) is at most $2n-d-1$. Chen, Lewis and Sheng \cite{CLS} prove the conjecture when the expected dimension is negative and Riedl and Yang \cite{RiedlYang} prove the rest of the conjecture. Our second application is to characterize the locus of rationally Chow-0 equivalent points on a very general hypersurface of sufficiently large degree.

\medskip

\noindent{\bf Theorem \ref{thm-Chow0}.}
{\it Let  $X$ be a very general hypersurface in $\PP^n$ of degree $d$.  
\begin{enumerate}
\item Let $k$ be a positive integer. If $d \geq \frac{3n+1-k}{2}$, then the only points of $X$ rationally equivalent to a $k$-dimensional family of points other than themselves are those that lie in $Z_1$. 
\item If $d \geq \frac{3n}{2}$, then $X$ contains lines but no other rational curves. 
\item If $d \geq \frac{3n+3}{2}$, then any point on  $X$ rationally equivalent to another point of $X$ lies in $Z_2$.
\end{enumerate}
}

\medskip

Theorem \ref{thm-Chow0} (1) is sharp. Theorem \ref{thm-Chow0} (2) generalizes a theorem of Clemens and Ran \cite{ClemensRan} who proved that when $d \geq \frac{3n+1}{2}$ all rational curves on $X$ are contained in $Z_L$.  Theorem \ref{thm-Chow0} (2) is sharp when $n$ is even. When $n$ is odd, we do not know whether a very general hypersurface of degree $\frac{3n-1}{2}$ can contain rational curves other than lines.

Our final application is to the exceptional set in the Green-Griffiths-Lang Conjecture. The conjecture predicts that if a variety $Y$ is of general type, then the images of all nonconstant entire curves are contained in a proper algebraic subvariety.

\begin{theorem*} [see Theorem \ref{thm-GGL}]
 If $d \geq \frac{3n+2}{2}$ and a relative version of the  Green-Griffiths-Lang Conjecture holds, then the exceptional locus for $X$  is contained in $Z_2$.
\end{theorem*}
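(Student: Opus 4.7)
The plan is to follow the template of Theorem \ref{thm-Chow0}(3), substituting the relative Green-Griffiths-Lang hypothesis for the algebraic notion of rational Chow-$0$ equivalence. Applied to the universal family $\xX \to \cU$ of smooth degree-$d$ hypersurfaces in $\PP^n$, the hypothesis produces a proper closed subscheme $\mathcal{E} \subset \xX$ such that for every $t \in \cU$ the restriction $\mathcal{E}_{X} \subseteq X = \xX_t$ contains the image of every non-constant entire curve $\CC \to X$. In particular, the exceptional locus $\mathcal{E}_X$ of the very general fiber is algebraic and varies algebraically in the family, and it suffices to show that each irreducible component of $\mathcal{E}_X$ lies in $Z_2$.

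First I would dispose of the one-dimensional components of $\mathcal{E}_X$. If $E$ is an irreducible curve component of $\mathcal{E}_X$ not contained in $Z_L$, then Theorem \ref{thm-AH} applied to $E$ gives $2g(\widetilde E) - 2 \geq \deg E \geq 1$ for the normalization $\widetilde E$, so $g(\widetilde E) \geq 2$. By the Little Picard Theorem, $\widetilde E$ admits no non-constant entire map from $\CC$, and the same then holds for $E$ itself, contradicting $E \subseteq \mathcal{E}_X$. Thus every one-dimensional component of $\mathcal{E}_X$ lies in $Z_L \subseteq Z_2$.

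For higher-dimensional components, suppose for contradiction that an irreducible component $E$ of $\mathcal{E}_X$ satisfies $\dim E \geq 2$ and $E \not\subseteq Z_2$. Following the strategy of Theorem \ref{thm-Chow0}(3), I would argue that through a very general point $p \in E \setminus Z_2$ the algebraic structure on $\mathcal{E}$ afforded by relative GGL produces a positive-dimensional algebraic family of Zariski closures $Y$ of non-constant entire curves containing $p$. Each such $Y$ meets $X$ in a positive-dimensional subset; drawing secant lines through pairs of points of $Y$ produces a positive-dimensional family of lines in $\PP^n$ through $p$ whose generic member meets $X$ in at least three points and whose basepoints sweep out a positive-dimensional subvariety of $E \setminus Z_2$. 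This is precisely the configuration of a $1$-clustered family of lines in $\bG(1,n)$ introduced in this paper, and the classification of such families, combined with the hypothesis $d \geq \frac{3n+2}{2}$, rules out its existence.

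The key obstacle is the algebraicity input from relative GGL: without an explicit formulation guaranteeing that entire curves across fibers of $\xX \to \cU$ assemble into an algebraic family of Zariski closures, the clustered-families machinery cannot be directly applied, which is precisely why the theorem is stated conditionally. Granted this input, the rest of the argument closely parallels the unconditional proof of Theorem \ref{thm-Chow0}(3), with the improved degree bound $\frac{3n+2}{2}$ matching Theorem \ref{thm-AH} and reflecting the more rigid one-parameter nature of entire curves compared with rational Chow-$0$ equivalences.
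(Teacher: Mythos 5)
There is a genuine gap. The paper's proof of Theorem \ref{thm-GGL} is a direct application of the tower machinery: one takes $\cB_{n,d}$ to be the locus of pointed hypersurfaces $(p,X)$ with a nonconstant entire curve through $p$, observes that the induced tower at level $d-2$ is contained in the GGL exceptional locus for hypersurfaces in $\PP^{d-2}$ --- which is proper by the ``relative'' hypothesis --- and then invokes Theorem \ref{thm-MainIncidenceThm} together with Proposition \ref{prop-d-1d-2cases}: either $\cB_{n,d}$ has codimension $2(d-2-n)+1 > n-1$ in $\cU_{n,d}$, so its fiber over a very general $X$ is empty, or it is induced by a $\PGL_2$-invariant family $\FF\subset\cU_{1,d}$ of binary forms with at most two distinct roots, i.e.\ it is contained in $Z_2$ (or in the locus of lines on $X$). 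Your proposal never forms the tower and never uses the hypothesis that the exceptional locus in $\PP^{d-2}$ is proper; but that hypothesis is the entire input, and the degree bound $\frac{3n+2}{2}$ comes precisely from descending $d-2-n$ steps while gaining codimension $2$ per step. Working only inside the fixed $\PP^n$, as you do, there is no mechanism to produce the bound.

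Concretely, your treatment of components of $\mathcal{E}_X$ of dimension at least $2$ is not a proof. The claim that secant lines through pairs of points of the Zariski closure $Y$ of an entire curve ``meet $X$ in at least three points'' and thereby form a forbidden $1$-clustered family is both unjustified and aimed in the wrong direction: a general line meets $X$ in $d$ points, so meeting $X$ in at least three points is no constraint at all; the degenerate configurations relevant here are lines meeting $X$ in \emph{at most} two points, which is exactly what defines $Z_2$. Moreover, $1$-clustered families in this paper live in $\bG(k-1,N)$ for the large ambient $\PP^N$ of Lemma \ref{lem-allparameterized} and measure how the codimension of the tower drops under the containing-family operation; they are not produced by secant constructions inside a single hypersurface, and no contradiction is derived from your configuration. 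Your disposal of one-dimensional components via Theorem \ref{thm-AH} and Picard's theorem is sound as far as it goes (and is a reasonable observation), but it does not address the main case, and Theorem \ref{thm-AH} is itself proved by the very tower argument you would need to run here.
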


Our approach for solving these problems involves a careful analysis of linear sections of very general hypersurfaces. We develop the study of $\ell$-clustered families of subspaces in the Grassmannian. Let $B \subset \bG(k-1, n)$ denote an irreducible family of $(k-1)$-dimensional projective linear spaces in $\PP^n$. Assume that the codimension of $B$ is $\epsilon >0$. Let $C \subset \bG(k,n)$ denote the family of $k$-dimensional projective linear spaces consisting of those linear spaces that contain a member of $B$. We call $C$ the {\em containing family} of $B$. By results of \cite{RiedlYang}, the codimension of $C$ in $\bG(k,n)$ is at most $\epsilon-1$. The family $B$ is {\em $\ell$-clustered} if the codimension of $C$ in $\bG(k,n)$ is $\epsilon -\ell$. In this paper, we classify $1$-clustered families in $\bG(k-1,n)$.
\medskip

\noindent {\bf Theorem \ref{thm-kPlanesCodim2}.}
{\it Let $B$ be an irreducible $1$-clustered family of $(k-1)$-dimensional linear spaces in $\PP^n$. Then the codimension $\epsilon$ of $B$ in $\bG(k-1,n)$ is at most $n-k+1$. If $\epsilon \geq 2$, then there is some irreducible subvariety $Z \subset \PP^n$ of dimension $n-k+1-\epsilon$ such that $B$ is the set of $(k-1)$-dimensional linear spaces intersecting $Z$.}
\medskip

More generally, we characterize the possible cohomology classes of $\ell$-clustered families and completely classify the extremal $\ell$-clustered families.
\medskip

\noindent {\bf Theorem \ref{thm-cohomologyclass}.}
{\it Let $\ell < k < n$ be integers. Let $B \subset \bG(k-1, n)$ be an irreducible $\ell$-clustered family of $(k-1)$-dimensional linear spaces. Let $[B]= \sum_{\lambda} a_{\lambda} \sigma_{\lambda}$ be the cohomology class of $B$ expressed in the Schubert basis.  
\begin{enumerate}
\item  Then any partition $\lambda$ occurring in $[B]$ with $a_{\lambda} \not=0$ has $\lambda_i =0$ for $i >\ell$. 
\item In particular, the codimension $\epsilon$ of $B$ is at most $\ell(n-k+1)$.
\item If $\epsilon = \ell (n-k+1)$, then $B$ parameterizes $(k-1)$-dimensional linear spaces that contain a fixed $\PP^{\ell-1}$.
\end{enumerate}}
\medskip

The classification of $1$-clustered families yields the stated applications via our main technical result Theorem \ref{thm-MainIncidenceThm}.

\subsection*{Organization of the paper} In \S \ref{sec-Prelim}, we collect facts about Grassmannians and rigidity of Schubert cycles. In \S \ref{sec-Clustered}, we introduce and classify $1$-clustered families. This forms the main technical tool of the paper. In \S \ref{sec-Main}, we use the classification of $1$-clustered families to prove our main theorem on families of hypersurfaces. We then deduce applications to rationally Chow zero equivalent points, rational curves and algebraic hyperbolicity.

\subsection*{Acknowledgments} We would like to thank Lawrence Ein, Joe Harris, Mihai P\u{a}un and David Yang for fruitful conversations on the subject of this paper. We are especially grateful to Xi Chen for bringing the problems discussed in this paper to our attention.

\section{Preliminaries}\label{sec-Prelim}

\subsection{Preliminaries on the Grassmannian} In this subsection, we  briefly recall preliminary facts about the Grassmannian and its cohomology. We refer the reader to \cite{CoskunLR, CoskunRigid, CoskunImanga} for more details.

Let $\bG(k,n)$ denote the Grassmannian that parameterizes $k$-dimensional projective linear subspaces of $\PP^n = \PP V$. The Grassmannian $\bG(k,n)$ is a projective variety of dimension $(k+1)(n-k)$ and embeds in $\PP( \bigwedge^{k+1} V) \cong \PP^{\binom{n+1}{k+1} -1}$ under the Pl\"{u}cker embedding. The cohomology of $\bG(k,n)$ is generated by Schubert classes. Let $\lambda$ be a partition with $k+1$ parts that satisfy
$$n-k \geq \lambda_1 \geq \cdots \geq \lambda_{k+1}\geq 0.$$ Fix a complete flag $F_{\bullet} : 0 \subset F_1 \subset \cdots \subset F_{n+1} = V$ on $V$, where $F_i$ is an $i$-dimensional subspace of $V$. The Schubert variety $\Sigma_{\lambda_{\bullet}}(F_{\bullet})$ is defined by 
$$\Sigma_{\lambda}(F_{\bullet}) := \{ \PP W \in \bG(k,n) \ | \ \dim (W \cap F_{n-k+i - \lambda_i}) \geq i, 1 \leq i \leq k+1 \}.$$
The class $\sigma_{\lambda}$ of $\Sigma_{\lambda}(F_{\bullet})$ does not the depend on the flag. The Schubert classes $\sigma_{\lambda}$ give an additive $\ZZ$-basis for the cohomology ring $H^*(\bG(k,n), \ZZ)$ as $\lambda$ varies over all admissible partitions. The cohomology class of any subvariety of $\bG(k,n)$ is a nonnegative $\ZZ$-linear combination of Schubert classes. In particular, the product of two Schubert classes can be expressed as a nonnegative linear combination of other Schubert classes
$$\sigma_{\lambda} \cdot \sigma_{\mu} = \sum_{\nu} c_{\lambda, \mu}^{\nu} \sigma_{\nu},$$ where $c_{\lambda, \mu}^{\nu} \geq 0$ are the Littlewood-Richardson coefficients. 

\begin{notation}
Given a partition $\lambda$ with $\lambda_{1} \not= n-k$, let $\lambda^h$ be the partition with $\lambda^h_i = \lambda_i +1$ for every $1 \leq i \leq k+1$. Similarly, given a partition with $\lambda_{k+1} = 0$, let $\lambda^p$ be the partition with $\lambda^p_1 = n-k$ and $\lambda^p_i = \lambda_{i-1}$ for $1< i \leq k+1$.
\end{notation}
We will need the following basic facts.
\begin{fact}\label{fact-intersection} Let $\sigma_{\lambda}$ and $\sigma_{\nu}$ be Schubert classes in the cohomology of $\bG(k,n)$.
\begin{enumerate}
\item Then $\sigma_{\lambda} \cdot \sigma_{\mu} \not= 0$ if and only if $\mu_i \leq n-k - \lambda_{k+2 -i}$ for all $1 \leq i \leq k+1$.
\item Let $\mu$ be the partition with $\mu_i =1$ for $1 \leq i \leq k+1$. Let $\lambda$ be a partition with $\lambda_1 \not= n-k$. Then  $\sigma_{\lambda} \cdot \sigma_{\mu} = \sigma_{\lambda^h}$.
\item Let $\mu$ be the partition with $\mu_1 = n-k$ and $\mu_i =0$ for $1< i \leq k+1$. Let $\lambda$ be a partition with $\lambda_{k+1}=0$. Then $\sigma_{\lambda} \cdot \sigma_{\mu} = \sigma_{\lambda^p}$.
\end{enumerate}
\end{fact}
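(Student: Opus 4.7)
All three parts are classical statements in the Schubert calculus of $\bG(k,n)$. The plan is to prove (2) and (3) directly from Pieri's rule, and to prove (1) by combining Kleiman transversality with the nonemptiness criterion for Richardson varieties.

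For part (1), set $\lambda^{*}_{i}=n-k-\lambda_{k+2-i}$, so the stated inequality is exactly the containment $\mu\subseteq\lambda^{*}$. Recall that $\{\sigma_{\lambda}\}$ and $\{\sigma_{\lambda^{*}}\}$ form Poincar\'{e} dual bases, and in particular $\sigma_{\lambda}\cdot\sigma_{\lambda^{*}}=[\mathrm{pt}]$. For the ``if'' direction, when $\mu\subseteq\lambda^{*}$ I would take opposite flags $F_{\bullet}, F'_{\bullet}$: the intersection $\Sigma_{\lambda}(F_{\bullet})\cap\Sigma_{\mu}(F'_{\bullet})$ is then a Richardson variety, known to be nonempty and irreducible of the expected dimension. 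By Kleiman transversality the product $\sigma_{\lambda}\cdot\sigma_{\mu}$ is represented by this nonempty intersection, hence is nonzero. For the ``only if'' direction, if $\mu_{i}+\lambda_{k+2-i}>n-k$ for some $i$, then for general flags the subspaces $F_{n-k+i-\lambda_{i}}$ and $F'_{n-k+(k+2-i)-\mu_{k+2-i}}$ meet only at the origin, while any $W$ in the intersection would be forced by a dimension count to meet them in a common nonzero subspace; hence $\Sigma_{\lambda}(F_{\bullet})\cap\Sigma_{\mu}(F'_{\bullet})=\emptyset$ and the product vanishes.

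For parts (2) and (3), I invoke Pieri's rule: $\sigma_{\lambda}\cdot\sigma_{(p)}$ is the sum of $\sigma_{\nu}$ over horizontal strips $\nu/\lambda$ of size $p$, and $\sigma_{\lambda}\cdot\sigma_{(1^{p})}$ is the analogous sum over vertical strips. For (2), with $p=k+1$, any vertical strip in a partition with at most $k+1$ rows must place exactly one box in each row, giving the unique target $\lambda^{h}$, which fits in the $(k+1)\times(n-k)$ rectangle precisely when $\lambda_{1}\ne n-k$. For (3), with $p=n-k$, writing $a_{i}=\nu_{i}-\lambda_{i}$, the horizontal strip conditions give $a_{1}\le n-k-\lambda_{1}$ and $a_{i}\le\lambda_{i-1}-\lambda_{i}$ for $i\ge 2$; summing the upper bounds yields $n-k-\lambda_{k+1}$, which equals $n-k$ exactly when $\lambda_{k+1}=0$, forcing every $a_{i}$ to be maximal and producing the unique $\nu=\lambda^{p}$.

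The main obstacle is part (1): parts (2) and (3) reduce cleanly to the Pieri formulas, while (1) requires invoking the classical nonemptiness statement for Richardson varieties together with Kleiman transversality in order to translate the combinatorial containment $\mu\subseteq\lambda^{*}$ into nonvanishing of the cohomological product.
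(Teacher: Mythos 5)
The paper states this Fact without proof, citing standard references on Schubert calculus, so there is no in-paper argument to compare against. Your proof is correct and is the standard one: parts (2) and (3) follow from the two Pieri rules exactly as you say (the vertical strip of size $k+1$ in at most $k+1$ rows is forced to be one box per row, and the telescoping bound $\sum_i a_i \le n-k-\lambda_{k+1}$ forces $\nu=\lambda^p$), and part (1) is the classical nonvanishing criterion, with the ``if'' direction via nonemptiness of Richardson varieties plus Kleiman transversality and the ``only if'' direction via the dimension count showing a $(k+1)$-dimensional $W$ would have to meet $F_{n-k+(k+2-i)-\lambda_{k+2-i}}\cap F'_{n-k+i-\mu_i}=0$ nontrivially. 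No gaps.
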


A Schubert class $\sigma_{\lambda}$ in $\bG(k,n)$ is called {\em rigid} if any closed subvariety representing $\sigma_{\lambda}$ is a Schubert variety. A Schubert class $\sigma_{\lambda}$ is called {\em multi-rigid} if any closed subvariety of $\bG(k,n)$  representing $m \sigma_{\lambda}$ is a union of $m$ Schubert varieties. A complete classification of rigid and multi-rigid Schubert classes in $\bG(k,n)$ is known thanks to the work of Hong, Robles, The and the first author (see \cite{CoskunRigid, CoskunImanga, CoskunRobles, Hong, Hong2, RoblesThe}). Let $\lambda$ be the partition with $\lambda_i=n-k$ for $1 \leq i \leq j$ and $\lambda_i = 0$ for $j < i \leq k+1$. Then $\sigma_{\lambda}$ is multi-rigid in $\bG(k,n)$.

\section{Classification of $\ell$-clustered families}\label{sec-Clustered}
In this section, we classify $1$-clustered families and prove results about $\ell$-clustered families in general.

\begin{notation}
Let $B \subset \bG(k-1,n)$ be an irreducible family of $(k-1)$-dimensional linear subspaces of $\PP^n$. Let $\epsilon>0$ be the codimension of $B$ in $\bG(k-1,n)$. Let $C \subset \bG(k,n)$ be the family of all $k$-dimensional linear spaces that contain a member of $B$. We refer to $C$ as the \emph{containing family} of $B$. Let $I_{B,C}$ denote the incidence correspondence 
$$I_{B,C} = \{ (b, c) | b \in B, c \in C, b \subset c \} \subset B \times C$$  and let $\pi_1$ and $\pi_2$ be the two projections from $I_{B,C}$ to $B$ and  $C$, respectively.
\end{notation}

By assumption, the fiber of $\pi_1$ over $b \in B$ consists of all $k$-dimensional linear subspaces of $\PP^n$ containing $b$, hence is isomorphic to $\PP^{n-k}$. Consequently, $I_{B,C}$ is irreducible of dimension $$\dim I_{B,C} = (k+1)(n-k)+k - \epsilon.$$  We say that the family $B$ is {\em $\ell$-clustered} if the general fiber of $\pi_2$ has dimension $(k-\ell)$. In other words, $B$ is $\ell$-clustered if the general $c \in C$ contains an $(k-\ell)$-dimensional family of $(k-1)$-dimensional linear subspaces contained in $B$. If $B$ is $\ell$-clustered, then $C= \pi_2(I_{B,C})$ is an irreducible variety of dimension $(k+1)(n-k) - \epsilon + \ell$. We conclude that the codimension of $C$ is $\epsilon - \ell$. For future reference, we state this in the following lemma.

\begin{lemma}\label{lem-cluster}
Let $B \subset \bG(k-1,n)$ be an irreducible family of codimension $\epsilon$. The family $B$ is $\ell$-clustered if and only if the codimension of the containing family $C$ is $\epsilon -\ell$.
\end{lemma}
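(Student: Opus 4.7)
The plan is to prove both directions at once by pure dimension bookkeeping on the incidence correspondence $I_{B,C}$ via its two projections $\pi_1$ and $\pi_2$. The essential input is that $\pi_1$ has uniform fiber dimension: over any $b \in B$, the fiber consists of all $k$-planes in $\PP^n$ containing the fixed $(k-1)$-plane $b$, which is a copy of $\PP^{n-k}$. In particular $\pi_1$ is a $\PP^{n-k}$-bundle over $B$, so $I_{B,C}$ is irreducible of dimension
\[
\dim I_{B,C} \;=\; \bigl(k(n-k+1) - \epsilon\bigr) + (n-k) \;=\; (k+1)(n-k) + k - \epsilon.
\]

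Next I would pass to $\pi_2 \colon I_{B,C} \to C$. Since $C = \pi_2(I_{B,C})$ by definition of the containing family, $C$ is irreducible and $\pi_2$ is surjective. The fiber of $\pi_2$ over a point $c \in C$ is exactly the set of members of $B$ contained in $c$, viewed as a subvariety of the Grassmannian $\bG(k-1,k) \cong (\PP^k)^\vee$ of hyperplanes in $c$. Thus $B$ being $\ell$-clustered is by definition the statement that the general fiber of $\pi_2$ has dimension $k-\ell$. By the fiber dimension theorem applied to the surjection $\pi_2$ of irreducible varieties,
\[
\dim C \;=\; \dim I_{B,C} - \dim(\text{general fiber of }\pi_2).
\]

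The two implications now fall out by arithmetic. If $B$ is $\ell$-clustered, the displayed identity gives
\[
\dim C \;=\; (k+1)(n-k) + k - \epsilon - (k - \ell) \;=\; (k+1)(n-k) - (\epsilon - \ell),
\]
so $\codim_{\bG(k,n)} C = \epsilon - \ell$. Conversely, if the codimension of $C$ in $\bG(k,n)$ is $\epsilon - \ell$, then $\dim C = (k+1)(n-k) - \epsilon + \ell$, and solving the same identity for the general fiber dimension of $\pi_2$ yields $k - \ell$, which is exactly the definition of $\ell$-clustered.

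There is no genuine obstacle: the only thing to be slightly careful about is that $\pi_1$ is equidimensional with irreducible fibers, so that $I_{B,C}$ is itself irreducible and the dimensions of $I_{B,C}$ and $C$ are honestly given by adding or subtracting fiber dimensions rather than being mere upper bounds. This is ensured by the Grassmannian-bundle description of $\pi_1$ noted above.
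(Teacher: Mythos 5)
Your proof is correct and is essentially identical to the paper's argument, which also computes $\dim I_{B,C} = (k+1)(n-k)+k-\epsilon$ from the $\PP^{n-k}$-fibration $\pi_1$ and then applies the fiber-dimension theorem to $\pi_2$. The paper simply records the lemma as a restatement of this dimension count, so there is nothing to add.
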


Note that if $B \not= \bG(k-1, n)$, then $\ell \geq 1$ and the codimension of $C$ is at most $\epsilon -1$. The purpose of this section is to classify $1$-clustered families $B$. We will also make some remarks about $j$-clustered families. We begin with a few illustrative examples. 

\begin{example}
Let $B \subset \bG(k-1,n)$ be the Schubert variety parameterizing $(k-1)$-dimensional projective linear spaces that contain a fixed $\PP^{j-1}$. Then $B$ has codimension $j(n-k+1)$. The containing family $C \subset \bG(k,n)$ parameterizes $k$-dimensional projective linear spaces that contain the same $\PP^{j-1}$ and has codimension $j(n-k)$. In particular, by Lemma \ref{lem-cluster}, $B$ is $j$-clustered.
\end{example}

\begin{example}\label{ex-main}
More generally, let $\Gamma \subset \bG(j-1,n)$ be a subvariety and assume that a $k$-dimensional linear space containing a member of $\Gamma$ contains only finitely many elements of $\Gamma$. Let $B \subset \bG(k-1, n)$ be the variety that parameterizes $(k-1)$-dimensional linear spaces that contain a member of $\Gamma$.  The containing family $C$ consists of $k$-dimensional linear spaces that contain a member of $\Gamma$. Hence, the general member  of $C$ contains a $(k-j)$-dimensional family of members of $B$. Consequently,  $B$ is a $j$-clustered family.
\end{example}

We give a final example to show that not all examples of $j$-clustered families arise as in Example \ref{ex-main}.

\begin{example}\label{ex-nasty}
Let $H$ be a general hyperplane section of $\bG(1,3) \subset \PP^5$ which is not in the dual variety of $\bG(1,3)$. Then $H$ is not a Schubert variety and there is no line which intersects every  line parametrized by $H$. Fix a linear subspace $\Lambda=\PP^3 \subset \PP^4$ and fix a point $p$ not contained in $\Lambda$.  Let $B \subset \bG(2,4)$ be the family of planes obtained by taking the span of a line in $\Lambda$ parameterized by $H$ and the point $p$. Note that $B$ has codimension $3$, has cohomology class  $\sigma_{2,1,0}$, and is not a Schubert variety. The containing family $C$ consists of linear spaces $\PP^3$ that contain the point $p$.  Hence,  by Lemma \ref{lem-cluster}, $B$ is $2$-clustered.  However, there does not exist a $1$-parameter family $\Gamma$ of lines such that $B$ is the family of planes containing a member of $\Gamma$. Each of the lines in $\Gamma$ would have to contain the point $p$, hence the intersection of the lines in $\Gamma$ with $\Lambda$ would be a curve, which by degree considerations has to be a line. However, by our choice of $H$, there does not exist such a line. 
\end{example}

We now classify $1$-clustered families in $\bG(k-1,n)$. The proof will be by induction on $n$. The next proposition provides the base case of the induction.

\begin{proposition}\label{prop-basecase}
Let $B \subset \bG(k-1, k+1)$ be an irreducible family of codimension $\epsilon \geq 2$. Then $B$ is $1$-clustered if and only if $\epsilon=2$ and $B$ is the Schubert variety of $(k-1)$-dimensional linear spaces containing a fixed point.
\end{proposition}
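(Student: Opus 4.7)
The plan is to dualize the problem to a family of lines in a projective space, where a tangent-direction argument will simultaneously pin down $\epsilon = 2$ and identify $B$. I expect the only real obstacle to be spotting the right geometric constraint; once found, the rest is dimension counting.

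First, I would dualize. The isomorphism $\bG(k-1, k+1) \cong \bG(1, k+1)$ sending a $(k-1)$-plane $W$ to its annihilator $W^\perp$ in $Q := (\PP^{k+1})^*$ identifies $\bG(k, k+1)$ with $Q$ and reverses inclusion. Thus $B$ corresponds to an irreducible codimension-$\epsilon$ family $B^*$ of lines in $Q$, and the containing family $C$ corresponds to $C^* := \bigcup_{\ell \in B^*} \ell \subset Q$, of dimension $k + 2 - \epsilon$. The 1-clustered hypothesis becomes: through a general $p \in C^*$ passes a $(k-1)$-dimensional locus $F_p$ of lines of $B^*$, viewed inside $\bG(1, k+1)_p \cong \PP(T_p Q) \cong \PP^k$.

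The hard step will be the following tangent observation. Each $\ell \in F_p$ is contained in $C^*$, so at a general (smooth) $p$ its tangent direction at $p$ lies in $T_p C^*$. Viewing $F_p$ as a subvariety of $\PP(T_p Q)$ by sending a line to its tangent direction at $p$, we thus have
$$F_p \subseteq \PP(T_p C^*),$$
a linear subspace of dimension $\dim C^* - 1 = k + 1 - \epsilon$. Since $\dim F_p = k - 1$, this forces $\epsilon \leq 2$, hence $\epsilon = 2$; moreover $F_p = \PP(T_p C^*)$ is a linear hyperplane in $\PP^k_p$.

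Finally I would unwind this identification. A linear hyperplane of $\PP(T_p Q) = \PP^k_p$ corresponds to the set of lines through $p$ lying in a unique hyperplane $H_p \subset Q$ (namely the one with $T_p H_p = T_p C^*$). The union of these lines is $H_p$, and each lies in $B^*$, so $H_p \subseteq C^*$. Both have dimension $k$ and $C^*$ is irreducible, so $H_p = C^*$, and $C^* = H_0$ is a fixed linear hyperplane. Then $B^* \subseteq \bG(1, H_0)$, and both sides are irreducible of dimension $2k - 2$, so they coincide. Dualizing back, $H_0$ is the annihilator of a point $p_0 \in \PP^{k+1}$, and $B$ is the Schubert variety of $(k-1)$-planes containing $p_0$. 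The converse direction is immediate from Lemma \ref{lem-cluster}: this Schubert variety has codimension $2$ in $\bG(k-1, k+1)$, and its containing family (of $k$-planes through $p_0$) has codimension $1$ in $\bG(k, k+1)$, so it is 1-clustered.
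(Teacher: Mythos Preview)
Your proof is correct. After your dualization, the family of lines $F_p$ through a general $p\in C^*$ is literally the same family of lines through a general point $c\in C\subset\PP^{(k+1)\vee}$ that the paper considers, so the two arguments start from the same configuration. The difference is in how you extract the two conclusions. To get $\epsilon=2$, the paper observes that a $(k-1)$-dimensional family of distinct lines through $c$ sweeps out a $k$-dimensional subvariety, forcing $\dim C\ge k$; you instead constrain the tangent directions of those lines to lie in $\PP(T_pC^*)$ at a smooth point, which gives the same bound and, as a bonus, immediately identifies $F_p$ with the full linear $\PP(T_pC^*)$. To then show $C$ is a hyperplane, the paper argues that any two general points of $C$ are joined by a line inside $C$, while you simply take the union of the lines in $F_p$ to obtain a hyperplane $H_p\subset C^*$ and invoke irreducibility and equality of dimensions. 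Your tangent-space route is a bit slicker in that the identification of $F_p$ and the linearity of $C^*$ fall out of a single observation, at the mild cost of needing $p$ to be a smooth point of $C^*$; the paper's sweeping argument avoids any smoothness hypothesis but requires the separate ``two points determine a line'' step.
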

\begin{proof}
If $B$ is the family of $(k-1)$-dimensional linear spaces in $\PP^{k+1}$ containing a fixed point $p$, then the containing family $C$ is the family of $k$-dimensional linear spaces containing $p$. The codimension of $B$ in $\bG(k-1,k+1)$ is $2$ and the codimension of $C$ in $\bG(k,k+1)$ is $1$. Hence, by Lemma \ref{lem-cluster}, $B$ is $1$-clustered. 

Conversely, suppose $B$ is $1$-clustered.  Fix a general point $ c \in C$. Then $c$ contains a $(k-1)$-dimensional family $B'$ of elements of $B$. For each element $b' \in B'$, every $k$-dimensional linear space $c'$ containing $b'$ must be a member of $C$. The $k$-dimensional linear spaces containing $b'$ is a line in $\bG(k,k+1) = \PP^{(k+1) \vee}$. Hence, $C$ contains a $(k-1)$-dimensional family of lines all containing the point $c \in C$. These lines are distinct and  sweep out a $k$-dimensional subvariety of $C$. Consequently, $C$ has dimension at least $k$, or equivalently, codimension at most $1$.  Since $B$ is $1$-clustered and has codimension $\epsilon \geq 2$, we conclude that  $\epsilon = 2$ and $C$ has codimension exactly $1$. Since $C$ is irreducible and $k$-dimensional, $C$ must consist of all the $k$-dimensional linear spaces that contain an element of $B'$. In particular, through every point $c'$ of $C$, there is a line connecting $c$ and $c'$. Since $c$ was a general point, we conclude that there is a line between any two general points of $C$. Hence, $C$ is a linear space of codimension one in $\PP^{(k+1) \vee}$. All such linear spaces consist of $k$-dimensional linear spaces containing a fixed point $p$. 

Now it follows that every member of $B$ also contains the point $p$. If there were an element $b \in B$ not containing $p$, then there would exists a linear space $c$ containing $b$ and not $p$. The linear space $c$ would be a member of $C$ contradicting our description of $C$. Since the locus of $(k-1)$-dimensional linear spaces containing the point $p$ has codimension 2 in $\bG(k-1, k+1)$, $B$ must contain all these linear spaces. 
\end{proof}

We now use induction to classify the $1$-clustered families of $(k-1)$-dimensional linear spaces in every $\PP^n$.

\begin{theorem}
\label{thm-kPlanesCodim2}
Let $B$ be an irreducible $1$-clustered family of $(k-1)$-dimensional linear spaces in $\PP^n$. Then the codimension $\epsilon$ of $B$ in $\bG(k-1,n)$ is at most $n-k+1$. If $\epsilon \geq 2$, then there is some irreducible subvariety $Z \subset \PP^n$ of dimension $n-k+1-\epsilon$ such that $B$ is the set of $(k-1)$-dimensional linear spaces intersecting $Z$.
\end{theorem}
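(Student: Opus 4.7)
I would proceed by induction on $n$, with base case $n = k+1$ handled by Proposition \ref{prop-basecase}. For the inductive step ($n \geq k+2$), let $B \subset \bG(k-1, n)$ be an irreducible $1$-clustered family of codimension $\epsilon \geq 2$. Slice by a general hyperplane $H \subset \PP^n$ and consider
\[B_H := \{b \in B \mid b \subset H\} \subset \bG(k-1, n-1)_H,\]
where $\bG(k-1, n-1)_H$ denotes the Schubert subvariety of $\bG(k-1, n)$ parameterizing $(k-1)$-planes contained in $H$. By Kleiman transversality, the generic $B_H$ is either empty or has codimension $\epsilon$ in $\bG(k-1, n-1)_H$, with the dichotomy governed by whether the cohomology product $[B] \cdot \sigma_{(1^k)}$ vanishes in $H^*(\bG(k-1, n))$.

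In the main case where $B_H$ is generically nonempty, one verifies that $B_H$ is itself $1$-clustered: its containing family inside $\bG(k, n-1)_H$ is $C \cap \bG(k, n-1)_H$, which has codimension $\epsilon-1$ by the same transversality, so Lemma \ref{lem-cluster} applies. Irreducibility of $B_H$ would be established by a monodromy argument on the irreducible incidence $\{(b, H) : b \in B,\ b \subset H\} \to \PP^{n\vee}$. Induction then gives $\epsilon \leq n-k$ together with an irreducible $Z_H \subset H$ of dimension $n-k-\epsilon$ satisfying $B_H = \{b \subset H : b \cap Z_H \neq \emptyset\}$. I would then define $Z \subset \PP^n$ as the closure of $\bigcup_H Z_H$ as $H$ varies over a dense open $U \subset \PP^{n\vee}$, and verify $\dim Z = n-k+1-\epsilon$ via a dimension count on the incidence $\{(p, H) : p \in Z_H\} \subset \PP^n \times U$. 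The containment $B \subseteq \{b : b \cap Z \neq \emptyset\}$ follows by specializing $H \supset b$ and invoking the inductive description of $B_H$; for the reverse, given $b$ and a generic point $p \in b \cap Z$, one produces a generic hyperplane $H \supset b$ with $p \in Z_H$, so that $b \in B_H \subset B$, and closes by irreducibility together with the codimension match.

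The main obstacle is the extremal case, in which $B_H$ is generically empty (as happens in the model $B = \{b : b \ni p\}$ for every $H \not\ni p$) and the slicing argument breaks down. Here $[B] \cdot \sigma_{(1^k)} = 0$, and Fact \ref{fact-intersection}(2) shows that a codim-$\epsilon$ Schubert class $\sigma_\lambda$ satisfies $\sigma_\lambda \cdot \sigma_{(1^k)} = \sigma_{\lambda^h} \neq 0$ whenever $\lambda_1 \neq n-k+1$, so every $\lambda$ appearing in $[B]$ must have $\lambda_1 = n-k+1$. Iterating the slicing (intersecting $B$ with Schubert varieties $\{b \subset L\}$ of increasing codimension and analyzing the vanishing of the products $[B] \cdot \sigma_{(m^k)}$, or applying Pieri multiplications by various $\sigma_\mu$) forces $\epsilon = n-k+1$ and $[B] = a\,\sigma_{(n-k+1, 0, \ldots, 0)}$. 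Multi-rigidity of this Schubert class in $\bG(k-1, n)$ (recalled at the end of \S\ref{sec-Prelim}) then forces $B$ to be a union of Schubert varieties of $(k-1)$-planes through a fixed point, and irreducibility singles out $B = \{b : b \ni p\}$ for one $p \in \PP^n$, giving $Z = \{p\}$ of the required dimension $0 = n-k+1-\epsilon$. The monodromy step in the main case and the iterated cohomology/rigidity argument in the extremal case are the most delicate pieces.
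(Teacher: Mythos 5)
Your main case (where $B_H$ is generically nonempty) follows the paper's argument in essentially the same way: slice by hyperplanes, observe $B_H$ is $1$-clustered of codimension $\epsilon$, apply induction, and assemble $Z$. Two small differences: the paper uses a general \emph{pencil} of hyperplanes through a fixed general $H$, which makes the bound $\dim Z \leq n-k+1-\epsilon$ immediate, and it simply allows $Z_H$ to be reducible rather than trying to prove $B_H$ irreducible. Your monodromy argument does not in fact give irreducibility of $B_H$ — the general fiber of a dominant map from an irreducible variety can be reducible; monodromy only shows its components are permuted transitively — but this is harmless since one can run the induction component by component, as the paper does. The equality $\dim Z = n-k+1-\epsilon$ and both containments are then most cleanly obtained, as in the paper, by comparing codimensions with the containing family $C$: every $k$-plane meeting $Z$ lies in $C$, and $\codim C = \epsilon-1$ forces $Z$ to be irreducible and $C$ to equal the $k$-planes meeting $Z$, whence $B$ equals the $(k-1)$-planes meeting $Z$.

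The genuine gap is in your extremal case. From the emptiness of $B_H$ for general $H$ you correctly deduce that every $\lambda$ appearing in $[B]$ has $\lambda_1 = n-k+1$, but the step ``iterating the slicing $\dots$ forces $\epsilon = n-k+1$ and $[B] = a\,\sigma_{(n-k+1,0,\dots,0)}$'' does not go through as described. Once every $\lambda_1$ equals $n-k+1$, Fact \ref{fact-intersection}(1) shows that the intersection of $B$ with a \emph{general} Schubert variety $\{b \subset L\}$ of any positive codimension is already empty, so further general slicing and Pieri computations produce no new vanishing and cannot rule out, say, $\lambda = (n-k+1,1,0,\dots,0)$ with $\epsilon = n-k+2$. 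The $1$-clustered hypothesis must re-enter at this point. The paper does this by taking $H$ general among the hyperplanes that \emph{do} contain a member of $B$: then $B_H$ is a nonempty $1$-clustered family of codimension at most $\epsilon-1$ in $\bG(k-1,n-1)$, the inductive bound gives $\codim B_H \leq n-k$, genericity of $H$ forces equality, induction identifies $B_H$ as the $(k-1)$-planes through finitely many points, hence $C$ has codimension at most $n-k$, which together with $1$-clusteredness pins down $\epsilon = n-k+1$; irreducibility of $C$ then yields a single point $p$, and $B$ is the full Schubert variety of planes through $p$. (Alternatively, one could kill $\lambda_2,\dots,\lambda_k$ by invoking Theorem \ref{thm-cohomologyclass}(1) with $j=1$ and then finish with multi-rigidity as you propose, but that statement requires its own nontrivial geometric argument about the containing family and is not a consequence of Pieri-type vanishing.)
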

\begin{proof}
Let $C$ denote the containing family of $B$. Given a hyperplane $H$, let $B_H$ be the set of planes of $B$ lying in $H$.  Let $C_H$ denote the containing family of $B_H$ in $H$. We make the simple observation that if $B$ is $1$-clustered and $B_H$ is nonempty, then $B_H$ is also $1$-clustered. We will prove the theorem by induction on $n$. The case $n=k+1$ is Proposition \ref{prop-basecase}, so we may assume $n \geq k+2$. There are two cases depending on whether $B_H$ is empty or not.

Assume that the cohomology class of $B$ is $\sum_{\lambda} a_{\lambda} \sigma_{\lambda}$. If $\epsilon < n-k+1$, then $\lambda_1 < n-k+1$ for every partition $\lambda$ occurring in the class of $B$ with nonzero coefficient $a_{\lambda}$. The class of the locus of linear spaces that are contained in $H$ is the Schubert class $\sigma_{\mu}$, where $\mu_i =1$ for $1 \leq i \leq k$. Consequently, by Facts \ref{fact-intersection}  (1) and (2), the cohomology class of $B_H$ is nonzero. Hence, in this case $B_H$ is necessarily nonempty. Similarly, by Facts \ref{fact-intersection} (1) and (2), if $\epsilon \geq n-k+1$, $B_H$ is empty if and only if every $\lambda$ for which $a_{\lambda} \not=0$ has $\lambda_1 = n-k+1$. Now we analyze the two cases.

\textbf{Case 1:} For a general hyperplane $H$, $B_H$ is nonempty.  In this case, each component of $B_H$ has codimension $\epsilon$ in $\bG(k-1, n-1)$, and it follows by our induction hypothesis that $\epsilon \leq n-k$. Moreover, by our induction hypothesis, $B_H$ is the set of $(k-1)$-planes meeting some (possibly reducible) subvariety $Z_H$ of dimension $n-k-\epsilon$ in $H$. Take a general pencil of hyperplanes $H_{t,u} : = \{ tH+uH'\}$ containing $H$. The same analysis applies to the general member $H_{t,u}$ in this pencil and the members of $B$ lying in $H_{t,u}$ meet a variety $Z_{t,u}$. Moreover, the varieties $Z_{t,u}$ vary algebraically and trace out a subvariety $Z \subset \PP^n$ of dimension $n-k+1-\epsilon$. Let $U$ be a $k$-dimensional linear space that intersects $Z$ at a point $p \in Z$. Then $p \in H_{t,u}$ for some hyperplane in the pencil and $U$ contains an element of $B_{H_{t,u}}$, hence is in $C$. We conclude that every $k$-dimensional linear space intersecting $Z$ must be in $C$. 

The locus of $k$-dimensional linear spaces intersecting $Z$ has codimension $\epsilon -1$. Since $C$ is irreducible and has codimension $\epsilon -1$, we conclude that $Z$ must be irreducible and $C$ must equal the $k$-dimensional linear spaces intersecting $Z$. It follows that every $(k-1)$-dimensional linear space parameterized by $B$ also intersects $Z$. The locus of $(k-1)$-dimensional linear spaces intersecting $Z$ is irreducible of codimension $\epsilon$, hence $B$ must be the family of $(k-1)$-dimensional linear spaces intersecting $Z$. This concludes the induction in this case.

\textbf{Case 2:} For a general hyperplane $H$, $B_H$ is empty. In this case, we have $\epsilon \geq n-k+1$. Let $H$ be a hyperplane that is general among those that contain some element of $B$. Then the codimension of $B_H$ in $\bG(k-1,n-1)$ is at most $\epsilon-1$. Furthermore, $B_H$ is $1$-clustered in $\bG(k-1,n-1)$. By induction, the codimension of $B_H$ in $\bG(k-1,n-1)$ is at most $n-k$.  If the codimension were less than $n-k$, then the general hyperplane would contain a member of $B_H$, contrary to our assumption. We conclude that the codimension is $n-k$. Hence, by induction $B_H$ consists of $(k-1)$-dimensional linear spaces in $H$ that contain some fixed points $p_1, \dots, p_r$. Then $C$ contains the $k$-dimensional linear spaces that contain $p_i$. Hence, the codimension of $C$ is at most $n-k$. Since $B$ is $1$-clustered, we conclude that $\epsilon = n-k+1$. Since $C$ is irreducible, in fact $r=1$ and $C$ must consist of linear spaces that contain a fixed point $p$. Now every element of $B$ must also contain this point $p$. Since $B$ has codimension $n-k+1$, $B$ must consist of all the $(k-1)$-dimensional linear spaces containing $p$. This concludes the induction in this case.

We thus conclude that any $1$-clustered family in $\bG(k-1, n)$ whose codimension $\epsilon \geq 2$ is the set of $(k-1)$ dimensional linear spaces that intersect a subvariety of dimension $n-k+1-\epsilon$.
\end{proof}

The proof of Case 2 shows the following generalization of Proposition \ref{prop-basecase} holds.

\begin{corollary}
Let $B$ be an irreducible  family of $(k-1)$-dimensional linear spaces in $\PP^n$ of codimension $n-k+1$. Then $B$ is $1$-clustered if and only if $B$ parameterizes linear spaces containing a fixed point. 
\end{corollary}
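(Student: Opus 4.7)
The plan is to recognize this corollary as the extremal case $\epsilon = n-k+1$ of Theorem \ref{thm-kPlanesCodim2} and to give short arguments for both implications.

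For the easy ``if'' direction, I would fix a point $p \in \PP^n$ and take $B$ to be the Schubert variety of $(k-1)$-dimensional linear spaces containing $p$. A direct count (or Fact \ref{fact-intersection}) shows that $B$ has codimension $n-k+1$ in $\bG(k-1,n)$. The containing family $C$ consists of $k$-dimensional linear spaces through $p$ and has codimension $n-k$ in $\bG(k,n)$. Since $(n-k+1)-1 = n-k$, Lemma \ref{lem-cluster} yields that $B$ is $1$-clustered.

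For the ``only if'' direction, I would invoke Theorem \ref{thm-kPlanesCodim2} directly. Since $\epsilon = n-k+1 \geq 2$ (the statement is only interesting when $n \geq k+1$), the theorem produces an irreducible subvariety $Z \subset \PP^n$ of dimension $n-k+1-\epsilon = 0$ such that $B$ is the family of all $(k-1)$-dimensional linear spaces meeting $Z$. An irreducible variety of dimension zero is a single reduced point $p$, and a $(k-1)$-dimensional linear space meets $\{p\}$ if and only if it contains $p$. Hence $B$ parameterizes precisely the $(k-1)$-dimensional linear spaces through $p$.

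Since the heavy lifting is already done in Theorem \ref{thm-kPlanesCodim2}, there is no real obstacle here: the proof amounts to the observation that the subvariety $Z$ produced by the theorem collapses to a single point in this extremal range, together with the translation between ``meeting'' and ``containing'' in dimension zero. Alternatively, one could extract the argument directly from Case 2 of the proof of Theorem \ref{thm-kPlanesCodim2}, which already establishes the stronger statement that whenever $B$ is $1$-clustered with $\epsilon \geq n-k+1$, one must have $\epsilon = n-k+1$ and $B$ must consist of $(k-1)$-planes through a fixed point.
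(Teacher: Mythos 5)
Your proposal is correct and matches the paper's route: the paper obtains this corollary by observing that Case 2 of the proof of Theorem \ref{thm-kPlanesCodim2} already establishes it, and your direct application of the theorem's statement (with $\dim Z = n-k+1-\epsilon = 0$, so $Z$ is a single point) together with the easy codimension count for the ``if'' direction is the same argument in only slightly repackaged form. Your aside about needing $n \geq k+1$ so that $\epsilon \geq 2$ is a reasonable reading of the paper's standing assumptions.
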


As Example \ref{ex-nasty} indicates, Theorem \ref{thm-kPlanesCodim2} does not have an easy generalization. However, we can restrict the possible cohomology classes of $j$-clustered families and classify the extremal cases.

\begin{theorem}\label{thm-cohomologyclass}
Let $j < k < n$ be integers. Let $B \subset \bG(k-1, n)$ be an irreducible $j$-clustered family of $(k-1)$-dimensional linear spaces. Let $[B]= \sum_{\lambda} a_{\lambda} \sigma_{\lambda}$ be the cohomology class of $B$.  
\begin{enumerate}
\item  Then any partition $\lambda$ occurring in $[B]$ with $a_{\lambda} \not=0$ has $\lambda_i =0$ for $i >j$. 
\item In particular, the codimension $\epsilon$ of $B$ is at most $j(n-k+1)$.
\item If $\epsilon = j(n-k+1)$, then $B$ parameterizes $(k-1)$-dimensional linear spaces that contain a fixed $\PP^{j-1}$.
\end{enumerate}
\end{theorem}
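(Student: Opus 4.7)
The plan is to prove part (1) by induction on $k$, with a nested induction on $n$, and then deduce (2) and (3). The base case of (1) is $j = 1$: Theorem \ref{thm-kPlanesCodim2} shows that a $1$-clustered family of codimension $\epsilon$ has class $\deg(Z) \cdot \sigma_\epsilon$, a multiple of a single-row Schubert class whose partition has exactly one nonzero part. For the inductive step, I would combine two geometric reductions, each preserving the $j$-clustering property with the same codimension $\epsilon$.

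For the first reduction, take a general hyperplane $H \subset \PP^n$ and set $B_H = \{b \in B : b \subset H\} \subset \bG(k-1, n-1)$. A dimension count parallel to the proof of Theorem \ref{thm-kPlanesCodim2} shows that, when nonempty, $B_H$ is $j$-clustered of codimension $\epsilon$: its containing family is $C \cap \{c \subset H\}$, of codimension $\epsilon - j$ in $\bG(k, n-1)$. The inductive hypothesis on $n$ gives that $[B_H]$ is supported on partitions with at most $j$ nonzero parts. Transferring via the pushforward $\iota_* \sigma'_\mu = \sigma_{\mu + (1^k)}$ (a consequence of Fact \ref{fact-intersection}(2)) yields $a_\lambda = 0$ for every $\lambda$ with $\lambda_1 \leq n-k$ and $\lambda_{j+1} \geq 1$. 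For the second (dual) reduction, take a general point $p \in \PP^n$ and set $B_p = \{b \in B : p \in b\} \subset \bG(k-2, n-1)$ via projection from $p$; a parallel verification shows $B_p$ is also $j$-clustered of codimension $\epsilon$ when nonempty. Invoking induction on $k$ and the pushforward $\iota^p_* \sigma''_\nu = \sigma_{\nu^p}$ (Fact \ref{fact-intersection}(3)) then yields $a_\lambda = 0$ for every $\lambda$ with $\lambda_k = 0$ and $\lambda_{j+1} \geq 1$. For the remaining partitions (those with $\lambda_1 = n-k+1$ and $\lambda_k \geq 1$), I would iterate: for integers $m, \ell \geq 0$ with $m \leq k - j - 1$, restrict to $(k-1)$-planes contained in a general codimension-$\ell$ linear section of $\PP^n$ and containing a general $(m-1)$-plane. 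The reduced family sits in $\bG(k-1-m, n-\ell-m)$, has codimension $\epsilon$, and remains $j$-clustered; combined with the Pieri-type identities governing the corresponding pushforward, the inductive hypothesis systematically constrains $a_\lambda$ for partitions of the form $\lambda = ((n-k+1)^m, \nu + (\ell^{k-m}))$ with $\nu_{j+1} \geq 1$. The hardest part will be verifying in detail that this combined family of reductions really exhausts every $\lambda$ with $\lambda_{j+1} \geq 1$ and that the $j$-clustering property is preserved throughout.

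Parts (2) and (3) follow readily from (1). For (2), any partition $\lambda$ with $\lambda_{j+1} = 0$ has $|\lambda| = \sum_{i=1}^{j} \lambda_i \leq j(n-k+1)$, since each part is at most $n-k+1$. For (3), when $\epsilon = j(n-k+1)$, the unique partition of size $\epsilon$ with at most $j$ nonzero parts is the rectangle $((n-k+1)^j, 0^{k-j})$, so $[B] = a \cdot \sigma_{((n-k+1)^j, 0^{k-j})}$ for some positive integer $a$. Since this Schubert class is multi-rigid in $\bG(k-1, n)$ (by the results recalled at the end of Section \ref{sec-Prelim}), any subvariety representing $a$ times this class is a union of $a$ Schubert varieties of this form; irreducibility of $B$ then forces $a = 1$, and $B$ is the Schubert variety of $(k-1)$-dimensional linear spaces containing a fixed $(j-1)$-plane.
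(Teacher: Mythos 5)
Your parts (2) and (3) are fine and match the paper, but your proposed proof of part (1) has a genuine gap, and it sits exactly where you flagged it. The reductions you use --- restricting $B$ to the sub-Grassmannian $\Sigma_\mu = \{b : P \subset b \subset \Lambda\} \cong \bG(k-1-m, n-\ell-m)$ for a general $(m-1)$-plane $P$ and codimension-$\ell$ linear space $\Lambda$ --- detect the wrong set of coefficients. By the projection formula, the coefficient of $\sigma_\nu$ in $[B \cap \Sigma_\mu]$ is $\int_{\bG(k-1,n)} [B]\cdot \iota_*\sigma_{\nu^*}$, and since $\iota_*$ sends a Schubert class of $\Sigma_\mu$ to the Schubert class indexed by $((n-k+1)^m, \nu^* + (\ell^{\,k-m}))$, dualizing shows that this coefficient equals $a_{(\nu_1,\dots,\nu_{k-m},0^m)}$ with $\nu_1 \leq n-k+1-\ell$. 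In other words, restriction to $\Sigma_\mu$ only sees the $a_\lambda$ for partitions $\lambda$ that \emph{fit inside} the complementary $(k-m)\times(n-k+1-\ell)$ rectangle (your formula $\lambda = ((n-k+1)^m, \nu+(\ell^{k-m}))$ is the pushforward rule, not the restriction rule). Consequently any $\lambda$ with $\lambda_1 = n-k+1$ \emph{and} $\lambda_k \geq 1$ is invisible to every proper reduction in your family: forcing $\nu_1 = n-k+1 \le n-k+1-\ell$ gives $\ell = 0$, and $\lambda_k\ge 1$ gives $m=0$, so no smaller Grassmannian is reached. Such partitions (e.g.\ $(n-k+1,1,\dots,1)$, which has $k > j$ nonzero parts) are precisely ones the theorem must exclude, so the induction cannot close. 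There are also smaller issues --- $B_H$ and $B_p$ may be reducible, the containing family of $B_p$ inside the planes through $p$ can be strictly smaller than $C \cap \{c \ni p\}$ so you only get $j'$-clustered for some $j' \le j$ (harmless for the conclusion but needs saying), and when $j = k-1$ the hypothesis $j < k$ fails after projecting from a point --- but the rectangle obstruction is the essential one.

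The paper's proof avoids slicing altogether. Given $\lambda$ with $a_\lambda \neq 0$ and exactly $\ell$ nonzero parts, it notes that $B$ meets a general Schubert variety $\Sigma_{\lambda^*}$, and then exhibits an explicit Schubert condition $\mu$ on $\bG(k,n)$ (spanning the finitely many such members of $B$ with a fixed $(k-\ell)$-plane from the flag) whose general representative must meet the containing family $C$. Comparing $\codim \Sigma_\mu$ with $\dim \bG(k,n)$ bounds $\codim C$ from above, and subtracting from $\codim B$ gives $j = \codim B - \codim C \geq \ell$ directly. If you want to salvage an inductive approach you would need a separate argument for the "full rectangle support" partitions, and the natural such argument is essentially the paper's pairing with the containing family; so I would recommend adopting that mechanism for part (1). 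Your deduction of (2) from (1) and of (3) via multi-rigidity plus irreducibility is correct and is exactly what the paper does.
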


\begin{proof}
Let $\lambda$ be a partition such that $\sigma_{\lambda}$ occurs in the class of $B$ with nonzero coefficient $a_{\lambda}$. Assume that $\lambda_{\ell} \not= 0$ and $\lambda_{\ell+1}=0$. Let $\lambda^*$ be the dual partition defined by $$\lambda^*_i = n-k+1 - \lambda_{k+1-i}.$$ Then, by Kleiman's transversality theorem, a general Schubert variety with class $\sigma_{\lambda^*}$ intersects $B$ in $a_{\lambda}$ points. 
More explicitly, fix a general partial flag $$F_{1+ \lambda_k} \subset \cdots \subset F_{i+\lambda_{k-i+1}} \subset \cdots \subset F_{k+\lambda_1}.$$ Then there are $a_{\lambda}$ members of the family $B$ that also intersects $\PP F_{i+\lambda_{k-i+1}}$ in an $(i-1)$-dimensional projective linear space for $1 \leq i \leq k$. 

Now define a new partition $\mu$ for $\bG(k,n)$. Let $$\mu_i = \begin{cases} n-k  &  \mbox{for} \ 1 \leq i \leq k-\ell+1 \\ \lambda^*_{i-1} & \mbox{for} \ k-\ell +1< i \leq k+1\end{cases}.$$ Observe that $\mu$ is an admissible partition for $\bG(k,n)$ and for $i > k-\ell +1$, 
$$n-k+i - \mu_i = i -1 - \lambda_{k+2 - i}$$ A general Schubert variety with class $\sigma_{\mu}$ parameterizes $k$-dimensional projective linear spaces that contain a fixed projective space $\PP F_{k-\ell+1}$ of dimension $k-\ell$ and intersect linear spaces $\PP F_{i-1+\lambda_{k-i+2}}$ in an $i$-dimensional projective linear space for $k-\ell +1 < i \leq k+1$. The key observation is  that the intersection of a general Schubert variety $\Sigma_{\mu}$ with the containing family $C$ is nonempty. There are finitely many elements of $B$ containing the fixed linear space $\PP F_{k-\ell}$ and intersecting the linear spaces $\PP F_{i+\lambda_{k-i+1}}$ in an $(i-1)$-dimensional linear space for $k-\ell < i \leq k$. These $(k-1)$-dimensional linear spaces together with the fixed linear space $\PP F_{k-\ell +1}$ span a $k$-dimensional linear space in the containing family $C$.

The codimension of $\mu$ is $$(n-k) (k-\ell +1) + \sum_{i=\ell+1}^k \lambda_i^*.$$ Consequently, the codimension of $C$ is at most $$\codim (C) \leq (n-k)\ell - \sum_{i=\ell+1}^k \lambda_i^*.$$ On the other hand, the codimension of $B$ is $$\codim(B) = (n-k+1)\ell -  \sum_{i=\ell+1}^k \lambda_i^*.$$ Since $B$ is $j$-clustered, we conclude that $j =\codim(B) - \codim(C) \geq \ell$. This shows that if $\sigma_{\lambda}$ occurs in the class of a $j$-clustered subvariety $B$ with nonzero coefficient, then the number of nonzero entries $\ell$ in $\lambda$ can be at most $j$. This concludes the proof of (1).

Since the only nonzero entries in $\lambda$ can be $\lambda_i$ with $1 \leq i \leq j$ and $\lambda_i$ is at most $n-k+1$, we see that the codimension $$|\lambda| = \sum_{i=1}^{k} \lambda_i \leq j (n-k+1).$$ This concludes the proof of (2).

Let $\lambda$ be the partition such that $\lambda_i = n-k+1$ for $1 \leq i \leq j$ and $\lambda_i = 0$ for $j< i\leq k$. By parts (1) and (2), the cohomology class of $B$ has to be a multiple of $\sigma_{\lambda}$. This is a multi rigid cohomology class, in other words, the only representatives of multiples of this class are unions of Schubert varieties. Since $B$ is irreducible, it must consist of a single Schubert variety. The corresponding Schubert variety parameterizes $(k-1)$-dimensional linear spaces that contain a fixed $\PP^{j-1}$.  This concludes the proof of (3). 
\end{proof}

\begin{remark}
It is possible to give an elementary proof of Theorem \ref{thm-cohomologyclass} (3) without relying on classification of multi rigid cohomology classes. We will  give the argument. 

\begin{lemma}\label{lem-jClusteredBaseCase}
Let $B \subset \bG(k-1,n)$ be a $j$-clustered irreducible family of codimension $\epsilon = 2j$. Then $B$ consists of all of the $(k-1)$-dimensional linear spaced that contain a fixed $\PP^{j-1}$. 
\end{lemma}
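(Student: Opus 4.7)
Since the family of $(k-1)$-planes in $\PP^n$ containing a fixed $\PP^{j-1}$ has codimension $j(n-k+1)$ in $\bG(k-1,n)$, the hypothesis $\epsilon = 2j$ combined with the desired conclusion forces $n = k+1$, so I will work throughout in $\bG(k-1,k+1)$, identifying $\bG(k,k+1)$ with the dual projective space $(\PP^{k+1})^\vee$. By Lemma~\ref{lem-cluster}, the containing family $C \subset (\PP^{k+1})^\vee$ is an irreducible subvariety of codimension exactly $j$.

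The plan generalizes the strategy of Proposition~\ref{prop-basecase}: first prove that $C$ is a linear subspace, and then deduce the form of $B$. Fix a general $c \in C$ and consider the $(k-j)$-dimensional fiber $B_c := \pi_2^{-1}(c)$ of members of $B$ contained in $c$. For each $b \in B_c$, the pencil $\ell_b := \{c' \in \bG(k,k+1) : b \subset c'\}$ is a projective line through $c$, and by the definition of the containing family the entire line $\ell_b$ lies in $C$. The assignment $b \mapsto \ell_b$ is injective, since $b$ is recovered as the intersection of the hyperplanes in $\ell_b$, so these lines sweep out a cone $\widehat{B_c} \subset C$ with apex $c$ and dimension $(k-j)+1 = k-j+1 = \dim C$. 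Irreducibility of $C$ forces $C = \widehat{B_c}$, so $C$ is a cone with apex $c$; since $c$ was general, $C$ is a cone from each of its general points. This classically forces $C$ to be a linear subspace: the line joining any two general points of $C$ lies in $C$.

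Under duality, a codimension-$j$ linear subspace of $(\PP^{k+1})^\vee$ is precisely the family of hyperplanes of $\PP^{k+1}$ containing a fixed $L \cong \PP^{j-1}$. To finish, I would show that every $b \in B$ satisfies $L \subset b$: the pencil $\ell_b$ of hyperplanes containing $b$ lies in $C$, so each such hyperplane also contains $L$, whence $L \subset \bigcap_{c' \in \ell_b} c' = b$. This embeds $B$ into the irreducible codimension-$2j$ family of $(k-1)$-planes containing $L$, and equal codimensions together with irreducibility of $B$ give equality.

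The main obstacle is the cone step: one must verify that the $(k-j)$-dimensional family of lines $\{\ell_b\}_{b \in B_c}$ through $c$ truly sweeps out all of $C$, rather than a proper subvariety, and this is exactly where the numerical hypothesis $\epsilon = 2j$ is decisive, forcing $\dim \widehat{B_c} = \dim C$ so that irreducibility lets us conclude equality. The rest of the argument is then essentially formal once $C$ has been identified as a linear subspace.
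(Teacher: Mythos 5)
Your proof is correct and follows essentially the same route as the paper's: for a general $c \in C$ the pencils of hyperplanes through the members of $B_c$ sweep out a cone in $C$ of full dimension $k-j+1$, irreducibility makes $C$ a cone from each general point and hence a linear space, and the description of $B$ follows by comparing codimensions of irreducible varieties. You also rightly make explicit the assumption $n=k+1$ (forced by $\epsilon = 2j = j(n-k+1)$), which the paper's own dimension count $\dim C \geq k-j+1 \Rightarrow \codim C \leq j$ tacitly uses.
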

\begin{proof}
Let $C$ be the containing family of $B$. The general member $c \in C$ contains a $(k-j)$-dimensional family of members of $B$. Each $b \in B$ with $b \subset c$ the $k$-dimensional linear spaces containing $b$ is a line contained in $C$.  Hence, $\dim C \geq k-j+1$, or equivalently $\codim C \leq j$. Since $B$ is $j$-clustered of codimension $2j$, we conclude that $\codim C =j$ and $C$ must consist of  the union of lines containing $c$.  Since $c$ was a general point, we conclude that $C$ is a linear space of codimension $j$. Hence, $C$ parameterizes $k$-dimensional linear spaces that contain a fixed $\PP^{j-1}$. Every element of $B$ must also contain this $\PP^{j-1}$. The locus of $(k-1)$-dimensional linear spaces containing a fixed $\PP^{j-1}$ is an irreducible Schubert variety of codimension $2j$. We conclude that $B$ must equal this Schubert variety. 
\end{proof}

Lemma \ref{lem-jClusteredBaseCase} proves Theorem \ref{thm-cohomologyclass} (3) when $n=k+1$. Suppose that (3) holds by induction up to $n-1$.  Let $B \subset \bG(k,n)$ be an irreducible, $j$-clustered subvariety of codimension $j(n-k+1)$. Let $H$ be a general hyperplane among those that contain a member of $B$. Let $B_H$ be the members of $B$ that are contained in $H$. Observe that $B_H$ is also $j$-clustered. If the codimension of $B_H$ is $j(n-k)$, then, by induction on $n$, $B_H$ consists of all $(k-1)$-dimensional linear spaces that contain a fixed $\Lambda \cong \PP^{j-1}$. Then $C$ contains all the $k$-dimensional linear spaces that contain $\Lambda$. Since $C$ has codimension $j(n-k)$, $C$ must equal the locus of $k$-dimensional linear spaces that contain $\Lambda$. Then $B$ must also equal the locus of $(k-1)$-dimensional linear spaces that contain $\Lambda$ as desired.

There remains to show that $B_H$ has codimension $j(n-k)$ in $\bG(k-1, n-1)$. Let $Z$ be the space of hyperplanes in $\PP^n$ that contain an element of $B$. It suffices to show that $\dim (Z) \geq n-j$. By Theorem \ref{thm-cohomologyclass} (2) and Fact \ref{fact-intersection} (1), there exists elements of $B$ that contain a fixed $\Lambda \cong \PP^{k-1-j}$.  Given this element $b$ of $B$, there is a $\PP^{n-k}$ dimensional family of hyperplanes containing $b$. Hence, the codimension $k-j$ linear space of hyperplanes containing $\Lambda$ intersects $Z$ in a subvariety of dimension at least $n-k$. Therefore, the dimension of $Z$ is at least $n-j$, as desired. This concludes the induction and gives an elementary proof of Theorem \ref{thm-cohomologyclass} (3).
\end{remark}

\section{Incidence Argument}\label{sec-Main}

In this section, we introduce the notion of towers of induced subvarieties and classify towers where the codimension of linear sections increases by one each time using Theorem \ref{thm-kPlanesCodim2}. This classification stated in Theorem \ref{thm-MainIncidenceThm} and Proposition \ref{prop-d-1d-2cases} is the main tool for our applications to rational curves and Chow equivalence. 

\begin{notation}
Let $\cU_{n,d}$ denote the universal hypersurface of degree $d$ in $\PP^n$ parameterizing pairs $(p,X)$, where $X \subset \PP^n$ is a hypersurface of degree $d$ and $p\in X$ is a point. We call a pair $(p,X) \in \cU_{n,d}$ a  {\em pointed hypersurface}.  

We will study linear sections of pointed hypersurfaces. It is convenient to view all these linear sections in a fixed projective space. For this purpose, we  introduce  the notion of a parameterized linear section. For $r < n$, a {\em parametrized linear space} of dimension $r$ is a linear map $f: \PP^r \to \PP^n$. When $r=1$, we simply say {\em parameterized line}. Let $p \in \PP^n$ be a point. Let $\GG_p(k,n)$ denote the {\em space of parameterized $k$-dimensional linear spaces in $\PP^n$ through $p$}. Let $(p,X)$ be a pointed hypersurface in $\PP^n$ and let $f$ be a parameterized $k$-plane in $\PP^n$. Assume the image of $f$ contains $p$ but is not contained in $X$. Then  the {\em parameterized linear section} of $(p,X)$  by $f$ is $(f^{-1}(p), f^{-1}(X))$.

Given a $\PGL_{r+1}$-invariant subvariety $\cB_{r,d} \subset \cU_{r,d}$,   the {\em tower of induced varieties} $\cB_{n,d}$ for $n \geq r$ are the subvarieties $\cB_{n,d} \subset \cU_{n,d}$ consisting of pointed hypersurfaces such that some parameterized linear section lies in $\cB_{r,d}$. 
\end{notation}

We first construct a large dimensional pointed hypersurface $(p,Y)$ in $\PP^N$ that contains  every pointed hypersurface $(p,X)$ in $\PP^n$ as a parameterized linear section.

\begin{lemma}\label{lem-allparameterized}
For every pair of positive integers $n,d$, there exists an $N=N(n,d)$ and a  pointed hypersurface $(p,Y)$ in $\PP^N$ such that every member of $\cU_{n,d}$ is a parameterized linear section of $(p,Y)$. Furthermore, for every member $(p,X)$ of $\cU_{n,d}$, there exists a parameterized hyperplane  $\Lambda$ in $\GG_p(n,N)$ such that the induced map on tangent spaces $$d\phi_n: T_{\Lambda} \GG_p(n,N) \to T_{(p,X)} \cU_{n,d}$$ is surjective.
\end{lemma}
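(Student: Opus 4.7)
I would construct $(p,Y)$ explicitly. Set $M = \binom{n+d-1}{d-1}$ (the number of monomials of degree $d-1$ in $x_0,\ldots,x_n$), let $N = n+M$, and use coordinates $[x_0:\cdots:x_n:y_1:\cdots:y_M]$ on $\PP^N$. Enumerate the degree-$(d-1)$ monomials as $m_1,\ldots,m_M$, and define
$$Y = \{F = 0\} \subset \PP^N, \qquad F(x,y) = \sum_{i=1}^M y_i\, m_i(x),$$
a hypersurface of degree $d$. Take $p = [1:0:\cdots:0:0:\cdots:0] \in \PP^N$; since the $y_i$-coordinates all vanish at $p$, we have $F(p)=0$ and $(p,Y) \in \cU_{N,d}$.

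Surjectivity of $\phi_n: \GG_p(n,N) \to \cU_{n,d}$ then follows from a decomposition argument. Given $(q,X) \in \cU_{n,d}$, use the $\PGL_{n+1}$-action on $\PP^n$ to normalize $q = [1:0:\cdots:0]$, and write $X = \{G(u)=0\}$. The condition $G(q)=0$ says $G$ has no $u_0^d$ term, so every monomial of $G$ is divisible by some $u_j$ with $j \ge 1$. Collecting monomials of $G$ by their complementary degree-$(d-1)$ factor produces a decomposition
$$G(u) = \sum_{i=1}^M L_i(u)\, m_i(u), \qquad L_i \in \langle u_1,\ldots,u_n\rangle.$$
The parameterized $n$-plane $f: \PP^n \to \PP^N$ defined by $f(u) = [u_0:\cdots:u_n:L_1(u):\cdots:L_M(u)]$ satisfies $f(q)=p$ and $f^*F=G$, so $(q,X)$ is the parameterized linear section of $(p,Y)$ by $f$.

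For the tangent surjectivity at $\Lambda = f$, the cleanest approach is to observe that the assignment $G \mapsto (L_1,\ldots,L_M)$ is linear in the coefficients of $G$ and extends, after an algebraic choice of $\PGL_{n+1}$-normalization in an affine chart around $q$, to an algebraic local section $\sigma$ of $\phi_n$ on a Zariski neighborhood of $(q,X)$. Since $\phi_n \circ \sigma = \mathrm{id}$ locally, differentiating gives $d\phi_{n,\Lambda} \circ d\sigma_{(q,X)} = \mathrm{id}_{T_{(q,X)}\cU_{n,d}}$, so $d\phi_{n,\Lambda}$ is surjective with $d\sigma_{(q,X)}$ as a right inverse.

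I expect the main technical hurdle to be making the local section construction rigorous, specifically the algebraic choice of $\PGL_{n+1}$-element varying with $q$; this can be done by polynomial formulas in an affine chart but requires care. An alternative that avoids the section is a direct infinitesimal calculation: parameterize a perturbation of $f$ by $(\dot A, \dot B)$ with $A=I$ the top $(n+1) \times (n+1)$ block and $B$ the $M \times (n+1)$ bottom block, impose the one constraint $\dot B\,e_0 = B\,\dot A\,e_0$ coming from $p \in f(\PP^n)$, and verify that the induced $(\dot q, \dot G)$ covers $T_{(q,X)}\cU_{n,d}$ by using the free entries of $\dot B$ to sweep out all polynomials of degree $d$ vanishing at $q$ and the free first column of $\dot A$ to produce every $\dot q \in T_q \PP^n$.
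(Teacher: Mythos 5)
Your proof is correct, and it takes a genuinely different route from the paper. The paper argues by deformation theory: for an arbitrary $(p,Y)$ and linear section $f$, it reduces surjectivity of $d\phi_n$ to the vanishing of $H^1(\PP^n, f^*(T_{\PP^N}(-\log Y))\otimes I_p)$ via the Euler sequence, shows this can be arranged at any one point by enlarging $N$ (replacing $g$ by $g+\sum x_{N+i}g_{N+i}$), and then invokes Noetherian induction to find a single $Y$ that works for all of $\cU_{n,d}$. You instead write down one explicit universal hypersurface $F=\sum_i y_i m_i(x)$ from the start; the decomposition $G=\sum_i L_i m_i$ with $L_i\in\langle u_1,\dots,u_n\rangle$ (available exactly because $G$ has no $u_0^d$ term, i.e.\ $G(q)=0$) hands you the linear section directly, and the algebraic local section $\sigma$ of $\phi_n$ makes $d\phi_n$ surjective without any cohomology. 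In effect you apply globally the same mechanism the paper uses only locally at its key step, which is what lets you dispense with both the $H^1$ computation and the Noetherian induction, and which yields an explicit value $N=n+\binom{n+d-1}{d-1}$. The only points to nail down are the ones you already flag: fix once and for all a rule assigning each degree-$d$ monomial other than $u_0^d$ a factorization $u_j\cdot m_i$ with $j\ge 1$, so that $G\mapsto(L_1,\dots,L_M)$ is literally linear; choose the normalizing automorphism algebraically (e.g.\ $u_0\mapsto u_0$, $u_j\mapsto u_j-a_ju_0$ for $q=[1:a_1:\cdots:a_n]$); and note that your $f$ is automatically an embedding with image not contained in $Y$ since its first $n+1$ coordinates are the identity and $f^*F=G\neq 0$.
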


\begin{proof} First, we show that if $N$ is sufficiently large, then there exists a pointed hypersurface $(p,Y)$ in $\PP^N$ such that the parameterized linear sections of $(p,Y)$ dominate $\cU_{n,d}$ and the map $d \phi_n$ is surjective at any given point. We will then use Noetherian induction to complete the proof of the lemma.

 Let $(p,Y)$ be a pointed hypersurface in $\PP^N$ and let $f: \PP^n \to \PP^N$ be a parameterized linear section. Let $I_p \subset \OO_{\PP^n}$ be the ideal sheaf of the point $p$ in $\PP^n$ and let $T_{\PP^N}$ denote the tangent bundle of $\PP^N$. We then have the following commutative diagram 
$$ \xymatrix{  & 0 & 0 & & \\ 0 \ar[r] & f^*(T_{\PP^N} (- \log Y))\otimes I_p  \ar[u]\ar[r]  & f^*(T_{\PP^N} )\otimes I_p \ar[u] \ar[r]^{\phi} &  f^*(\OO_Y(Y)) \otimes I_p \ar[r] & 0  \\  0 \ar[r]  & E \otimes I_p \ar[u] \ar[r] & I_p(1)^{N+1} \ar[u]\ar[r]^{\psi} & f^*(\OO_Y(Y)) \otimes I_p \ar[u]_= \ar[r] & 0 \\ & I_p \ar[u] \ar[r]_= & I_p \ar[u] & \\ & 0 \ar[u] & 0 \ar[u] & }, $$
where the first row is the pullback by $f$ of the standard exact sequence expressing the log tangent sheaf of $Y$ tensored by $I_p$ and the second column is the pullback of the Euler sequence by $f$ tensored by $I_p$ and $E$ is the pullback of the kernel of the map $\OO(1)^{N+1} \to \OO_Y(Y)$ by $f$. By basic deformation theory, to show that the parameterized linear sections dominate $\cU_{n,d}$ near $f$, we need to show that the map induced by $\phi$
$$H^0(\PP^n, f^*(T_{\PP^N} )\otimes I_p) \to H^0( \PP^n, f^*(\OO_Y(Y)) \otimes I_p)$$ is surjective. By the diagram, it suffices to show that $H^1(\PP^n, f^*(T_{\PP^N} (- \log Y))\otimes I_p )=0$. Since $I_p$ has no cohomology, this is equivalent to showing that $H^1(\PP^n, E \otimes I_p) =0$. Hence, it suffices to show that the map induced by $\psi$ 
$$H^0(\PP^n, I_p(1)^{N+1}) \to H^0(\PP^n, f^*(\OO_Y(Y))\otimes I_p)$$
is surjective. 

Assume that $Y$ is defined by the polynomial $g$. This map is given by the $(N+1) \times 1$ matrix consisting of the pullback of the partial derivatives of $g$ by  $f$. It is now clear that we can pick $Y$ so that this map is surjective if $N$ is sufficiently large. Suppose for a given $Y$ the map is not surjective. We can then pick polynomials of degree $d-1$, say $g_{N+1}, \dots, g_{N+\ell}$, so  that  the map becomes surjective. If we replace the pointed hypersurface $(p, Y)$ by the hypersurface $(p, Y')$ in $\PP^{N+ \ell}$ defined by the polynomial $g + \sum_{i=1}^{\ell} x_{N+i} g_{N+i}$, then the corresponding map is surjective.  

To complete the proof, suppose $(p,X)$ does not occur as a parameterized linear section of $(p,Y)$, then we can find a hypersurface $(p,Y_1)$ that has both $(p,X)$ and $(p,Y)$ as hyperplane sections and is locally surjective around $(p,X)$. Consequently, by Noetherian induction, we can find a single hypersurface that works for all pointed hypersurfaces in $\cU_{n,d}$. 

\end{proof}

\begin{remark}
Using the notation from Lemma \ref{lem-allparameterized},  given a subvariety $\cB_n \subset \cU_{n,d}$, the codimension of $\cB_n$ in $\cU_{n,d}$ around a point where $d\phi_n$ is surjective is the same as the codimension of $\phi_n^{-1}(\cB_n)$ in $\GG_p(n,N)$. 
\end{remark}

\begin{definition}
Fix a pointed hypersurface $(p,Y)$ in $\PP^N$. A parameterized $n$-plane $\Lambda$ is \emph{$(n,m)$-submersive} if for  each $r$ with $m \geq r \geq n$, there exists a parameterized $r$-plane $\Lambda_r$ containing $\Lambda$ such that  $\phi_r$ is a submersion near $\Lambda_r$.
\end{definition}

\begin{remark}
\label{rem-submersiveHyp}
A straightforward adaptation of the proof of Lemma \ref{lem-allparameterized} shows that given integers $d$, $n$ and $m$, there exists a pointed hypersurface $(p,Y)$ in $\PP^N$ such that for every $(p,X)$ in $\cU_{n,d}$, there is a parameterized $n$-plane $\Lambda$ in $\PP^N$ with $\phi_n(\Lambda) = (p,X)$ that is $(n,m)$-submersive.
\end{remark}

We next prove two easy lemmas that will be used in the proof of the main theorem of this section. 

\begin{lemma}
\label{lem-glueAlongLine}
Let $X_1 \subset  \PP^{n_1}$ and $X_2 \subset \PP^{n_2}$  be degree $d$ hypersurfaces and let $\ell_1$ and $\ell_2$ be parameterized lines in $\PP^{n_1}$ and $\PP^{n_2}$, respectively. Suppose that $\ell_1^* X_1 = \ell_2^* X_2$ . Then there exists an integer $N$, a degree $d$ hypersurface $Y$ in $\PP^N$, a parameterized line $\ell$ in $\PP^N$ and parameterized $n_1$ and $n_2$-dimensional linear spaces  $\Lambda_1$ and $\Lambda_2$ such that
\begin{enumerate}
\item $\ell^* Y = \ell_1^* X_1 = \ell_2^* X_2$
\item $\Lambda_1^* Y = X_1$ and 
\item $\Lambda_2^* Y = X_2$.
\end{enumerate}
\end{lemma}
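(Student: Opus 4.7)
The idea is to build $Y$ on the simplest possible ambient $\PP^N$, namely the one where $\Lambda_1(\PP^{n_1})$ and $\Lambda_2(\PP^{n_2})$ meet exactly along the target line and span everything. First I would reduce to a normalized situation: after composing $\ell_1$ and $\ell_2$ with suitable automorphisms of $\PP^1$ and $\PP^{n_i}$, I can assume both parameterized lines are the standard ones $[s:t]\mapsto[s:t:0:\cdots:0]$, so that if $f_i$ denotes the defining polynomial of $X_i$, the common pullback $\ell_i^*X_i$ is the bi-form $f_i(s,t,0,\ldots,0)$; these automorphisms will be absorbed into the eventual definitions of $\Lambda_1$, $\Lambda_2$ and $\ell$.

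Next I set $N=n_1+n_2-1$ and put coordinates $z_0,z_1,z_2,\ldots,z_{n_1},z_{n_1+1},\ldots,z_N$ on $\PP^N$. Define $\Lambda_1$ as the embedding whose image is $\{z_{n_1+1}=\cdots=z_N=0\}$ (sending the $x_i$-coordinates of $\PP^{n_1}$ to $z_0,\ldots,z_{n_1}$), $\Lambda_2$ as the embedding with image $\{z_2=\cdots=z_{n_1}=0\}$ (sending the $y_0,y_1$-coordinates to $z_0,z_1$ and $y_2,\ldots,y_{n_2}$ to $z_{n_1+1},\ldots,z_N$), and $\ell$ as the parameterized line $[s:t]\mapsto[s:t:0:\cdots:0]$. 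By construction $\Lambda_1$ and $\Lambda_2$ intersect precisely along $\ell(\PP^1)$.

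Now I write each $f_i$ as a sum $f_i=g_i+h_i$, where $g_i\in k[z_0,z_1]$ collects the monomials in $f_i$ involving only $z_0,z_1$ and $h_i$ collects those in which at least one of the remaining variables appears. The hypothesis $\ell_1^*X_1=\ell_2^*X_2$ becomes $g_1(z_0,z_1)=g_2(z_0,z_1)$; call this common polynomial $g$. I then define
\[
F(z_0,\ldots,z_N)\;=\;g(z_0,z_1)+h_1(z_0,\ldots,z_{n_1})+h_2(z_0,z_1,z_{n_1+1},\ldots,z_N),
\]
and let $Y=\{F=0\}\subset\PP^N$.

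The three required identities are then direct verifications: restricting $F$ to $\Lambda_1$ sets $z_{n_1+1}=\cdots=z_N=0$, which kills every monomial of $h_2$ (each such monomial uses one of those variables) and recovers $g+h_1=f_1$; the symmetric computation gives $\Lambda_2^*Y=X_2$; and restricting to $\ell$ sets all $z_i$ with $i\ge 2$ to zero, killing both $h_1$ and $h_2$ and leaving $g$, which is precisely the common pullback $\ell_1^*X_1=\ell_2^*X_2$. There is no substantive obstacle here; the only point requiring any care is the bookkeeping that every monomial of $h_i$ uses at least one variable outside $\{z_0,z_1\}$ and at least one outside the opposite linear space, which is automatic from the way the decomposition $f_i=g_i+h_i$ was defined.
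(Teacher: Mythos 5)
Your proof is correct and follows essentially the same route as the paper's: normalize both lines to the standard $[s:t:0:\cdots:0]$, split each $f_i$ as $g+h_i$ with $g$ the part in $z_0,z_1$ alone, and glue via $F=g+h_1+h_2$ on $\PP^{n_1+n_2-1}$ with $\Lambda_1,\Lambda_2,\ell$ the obvious coordinate subspaces. The only detail worth adding is that $\ell_1^*X_1=\ell_2^*X_2$ is an equality of divisors, so $g_1$ and $g_2$ agree only up to a nonzero scalar and one must rescale $f_2$ first (the paper does this explicitly); otherwise the argument is the same.
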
 

\begin{proof}
Choose coordinates  $x_0, \dots, x_{n_1}$  on $\PP^{n_1}$ and  $y_0, \dots, y_{n_2}$ on $\PP^{n_2}$ such that the lines $\ell_1 = [s:t:0:\dots : 0]$ and $\ell_2 = [s:t:0: \dots : 0]$ are defined by the vanishing of $x_i$ for $i \geq 2$ and $y_i$ for $i \geq 2$, respectively. Suppose that in these coordinates $X_1$ is the vanishing locus of $f_1$ and $X_2$ is the vanishing locus of $f_2$.  Since $\ell_1^{*} X_1 = \ell_2^{*} X_2$, we can scale the polynomials $f_1$ and $f_2$ so that $ f_1 = g(x_0, x_1)+h_1(x_0, . . . , x_{n_1} )$, where every monomial  occurring in $h_1$ is divisible by $x_2,\dots, x_{n_1}$ and $f_2 = g(y_0,y_1)+h_2(y_0,\dots, y_{n_2})$, where every monomial in $ h_2$ is divisible by $y_2,\dots, y_{n_2}$. Define $f$ by $$f =g(z_0,z_1)+h_1(z_2,...,z_{n_1})+h_2(z_{n_1 +1}, \dots, z_{n_1 + n_2 -2}).$$ Then set $Y =V(f)$, let $\ell$ be  the line defined by $z_i=0$ for $2 \leq i \leq n_1+n_2 -2$,  let $\Lambda_1$ be the linear space defined by $z_i=0$ for $n_1+1\leq i \leq n_1+n_2 -2$ and let $\Lambda_2$ be defined by $z_i=0$ for $2\leq i \leq n_1$. Then $Y$ satisfies the conclusions of the lemma.
\end{proof}

\begin{lemma} \label{lem-clusteredNondecreasing}
Suppose $B \subset \bG(k-1,n)$ has codimension at least 2. Then if $B$ is 1-clustered, its covering set $C$ is also 1-clustered.
\end{lemma}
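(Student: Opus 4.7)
The plan is to use the structural classification from Theorem \ref{thm-kPlanesCodim2}, which completely describes $1$-clustered families of codimension $\geq 2$, and then verify the clustering condition one level up by a direct computation. Since $B$ is $1$-clustered with codimension $\epsilon \geq 2$, Theorem \ref{thm-kPlanesCodim2} produces an irreducible subvariety $Z \subset \PP^n$ of dimension $n-k+1-\epsilon$ such that $B$ is the family of $(k-1)$-planes meeting $Z$. Moreover, the proof of that theorem already identifies the containing family $C$ as the set of all $k$-planes meeting $Z$, which has codimension $\epsilon-1$ in $\bG(k,n)$.

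Let $D \subset \bG(k+1,n)$ denote the containing family of $C$. I claim that $D$ is exactly the set of $(k+1)$-planes meeting $Z$. One inclusion is immediate: if a $(k+1)$-plane $P$ contains a $k$-plane $\Lambda \in C$, then $P \supset \Lambda$ meets $Z$. Conversely, if $P$ meets $Z$ at some point $q$, then any $k$-plane $\Lambda \subset P$ containing $q$ lies in $C$, so $P$ lies in the containing family $D$.

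It remains to show that the set of $(k+1)$-planes meeting $Z$ has codimension exactly $\epsilon-2$ in $\bG(k+1,n)$. Consider the incidence correspondence $J = \{(z,P) \in Z \times \bG(k+1,n) : z \in P\}$. The first projection makes $J$ a bundle over $Z$ with fibers of dimension $(k+1)(n-k-1)$, so $\dim J = (n-k+1-\epsilon) + (k+1)(n-k-1)$. When $\epsilon > 2$, a dimension count on a fixed point $z_0 \in Z$ shows that a generic $(k+1)$-plane through $z_0$ does not meet $Z$ elsewhere, so the second projection $J \to D$ is generically injective, giving $\codim(D) = \epsilon-2$. When $\epsilon=2$, we have $\dim Z + (k+1) = n$, so every $(k+1)$-plane in $\PP^n$ meets $Z$ by the standard projective dimension bound, giving $D = \bG(k+1,n)$ and $\codim(D)=0=\epsilon-2$.

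Combining the two computations, $\codim(D) = \epsilon-2 = \codim(C) - 1$, so by Lemma \ref{lem-cluster} applied to $C$, the family $C$ is $1$-clustered. The main subtlety is the boundary case $\epsilon = 2$: there the codimension-$0$ claim requires that \emph{every} $(k+1)$-plane, rather than just a generic one, meet $Z$, but this is automatic from the dimensional inequality in projective space.
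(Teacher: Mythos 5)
Your proposal is correct and follows essentially the same route as the paper: invoke Theorem \ref{thm-kPlanesCodim2} to identify $B$, $C$, and the containing family $D$ of $C$ as the loci of $(k-1)$-, $k$-, and $(k+1)$-planes meeting $Z$, compute that their codimensions are $\epsilon$, $\epsilon-1$, $\epsilon-2$, and conclude via Lemma \ref{lem-cluster}. The only difference is that you spell out the incidence-correspondence dimension count and the boundary case $\epsilon=2$, which the paper simply asserts.
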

\begin{proof}
Let $B \subset \bG(k-1,n)$ have codimension $\epsilon \geq 2$. Since $B$ is $1$-clustered,   Theorem \ref{thm-kPlanesCodim2} implies that $B$ parameterizes $(k-1)$-dimensional linear spaces that intersect a subvariety $Z \subset \PP^n$. Furthermore, the covering family $C$ parameterizes the $k$-dimensional linear spaces that intersect $Z$ as well and has codimension $\epsilon -1$ in $\bG(k,n)$. Let $D \subset \bG(k+1,n)$ be the covering family of $C$. Then $D$ parameterizes the $(k+1)$-dimensional linear spaces that intersect $Z$ and has codimension $\epsilon -2$ in $\bG(k+1,n)$. By Lemma \ref{lem-cluster}, we conclude that $C$ is $1$-clustered.

\end{proof}

The main theorem of this section is the following.

\begin{theorem}
\label{thm-MainIncidenceThm}
Let $\cB_{r,d}$ be an integral $\PGL_{r+1}$-invariant subvariety of $\cU_{r,d}$. For $n \geq r$, let $\cB_{n,d}$ denote the tower of induced subvarieties. Assume that for some $m \geq r$,  $\cB_{m,d}$ has codimension at least $1$ in $\cU_{m,d}$. Then either
\begin{enumerate}
\item  for $r \leq n \leq m$,  $\cB_{n,d}$ has codimension at least $2(m-n)+1$ in $\cU_{n,d}$; or
\item  there exists a $\PGL_2$-invariant family $\FF \subset \cU_{1,d}$ such that $\cB_{m,d}$ is in the closure of the tower of varieties induced by $\FF$; or
\item $\cB_{m,d}$ is the space of pairs $(p,X)$ with $p$ contained in a line $\ell$ lying in $X$.
 \end{enumerate}
\end{theorem}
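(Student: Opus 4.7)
The plan is to reduce the tower structure on pointed hypersurfaces to a chain of Grassmannian subvarieties via Lemma \ref{lem-allparameterized}, and then to apply the classification of $1$-clustered families (Theorem \ref{thm-kPlanesCodim2}) at the first step where the codimension fails to drop by two.

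Fix an $(r,m)$-submersive pointed hypersurface $(p,Y) \subset \PP^N$ as produced by Lemma \ref{lem-allparameterized} and Remark \ref{rem-submersiveHyp}. Submersivity identifies $\cB_{n,d}$, up to the $\PGL$-actions on parameterizations, with a subvariety $\tilde\cB_n$ of the Grassmannian $\bG(n-1,N-1)$ of $n$-planes through $p$ in $\PP^N$, of codimension $\epsilon_n := \codim(\cB_{n,d})$, with $\tilde\cB_{n+1}$ equal to the containing family of $\tilde\cB_n$. Writing $\ell_n := \epsilon_n - \epsilon_{n+1}$, Lemma \ref{lem-cluster} says that $\tilde\cB_n$ is $\ell_n$-clustered and that $\ell_n \geq 1$. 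If case (1) fails, there is some $n_0 \in [r,m-1]$ with $\epsilon_{n_0} \leq 2(m-n_0)$. Telescoping gives $\sum_{i=n_0}^{m-1} \ell_i = \epsilon_{n_0} - \epsilon_m \leq 2(m-n_0)-1$, so some $\ell_{n^*} = 1$ for an $n^* \in [n_0, m-1]$, and then $\epsilon_{n^*} = \epsilon_{n^*+1}+1 \geq 2$. Theorem \ref{thm-kPlanesCodim2} applies to $\tilde\cB_{n^*}$ and identifies it with the locus of $n^*$-planes through $p$ in $\PP^N$ containing some line through $p$ in a cone $C_Z$ over a subvariety $Z \subsetneq \PP^{N-1}$ of dimension $N - n^* - \epsilon_{n^*}$.

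For each $\Lambda \in \tilde\cB_{n^*}$ lifting $(p,X) = (p, \Lambda \cap Y)$, there is a distinguished line $\ell \subset \Lambda \cap C_Z$ through $p$, which, viewed as a parameterized line in $\Lambda \cong \PP^{n^*}$, has pointed intersection $(p, \ell \cap X) = (p, \ell \cap Y)$. The argument now splits on whether $C_Z \subset Y$. If $C_Z \subset Y$, then every distinguished line lies in $Y$ and hence in $X = \Lambda \cap Y$, so every $(p,X) \in \cB_{n^*,d}$ has $p$ on a line in $X$. Propagating this from level $n^*$ to level $m$ (any $(p,X) \in \cB_{m,d}$ has an $n^*$-plane linear section in $\cB_{n^*,d}$ by transitivity of the tower, and the distinguished line inside that section is also a line in $\PP^m$) yields case (3). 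Otherwise $C_Z \not\subset Y$, and we let $\FF \subset \cU_{1,d}$ be the $\PGL_2$-orbit closure of the pointed divisors $(p, \ell \cap Y)$ for $\ell \subset C_Z$ through $p$ with $\ell \not\subset Y$. Then $\FF$ is a $\PGL_2$-invariant family, and the same propagation argument (extending a parameterized $r$-plane section realizing $(p,X) \in \cB_{m,d}$ to a parameterized $n^*$-plane section) shows $\cB_{m,d}$ lies in the closure of the tower induced by $\FF$, giving case (2).

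The main technical obstacle is the identification in the first paragraph: verifying that the submersive setup faithfully translates the pointed-hypersurface tower into a Grassmannian containing-family chain with codimensions and the $\ell$-clustered structure preserved. This is where the full strength of Lemma \ref{lem-allparameterized} together with the $(r,m)$-refinement in Remark \ref{rem-submersiveHyp} is needed, and where Lemma \ref{lem-glueAlongLine} is invoked to ensure that the distinguished-line construction is intrinsic to $(p,X)$ and independent of the choice of ambient $Y$. A secondary subtlety is that in case (2) the family $\FF$ should be a proper subvariety of $\cU_{1,d}$ for the conclusion to be useful; this follows from the dimension bound $\dim Z = N - n^* - \epsilon_{n^*}$ together with the hypothesis that $\cB_{m,d}$ is a proper subvariety.
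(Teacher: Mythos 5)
Your overall strategy is the same as the paper's: pass to planes through a point in a large ambient $(p,Y)$, observe that the codimension can drop by at most one at each step of the chain of containing families, locate a level where the drop is exactly one, and apply Theorem \ref{thm-kPlanesCodim2} there to produce $Z$ and hence $\FF$. Your pigeonhole/telescoping argument for locating a $1$-clustered level $n^*$ is a legitimate variant of the paper's use of Lemma \ref{lem-clusteredNondecreasing} (which instead shows that one may take $n^* = m-1$, so that $\Gamma_m$ is directly the containing family of the planes meeting $Z$ and no upward propagation is needed); your extra propagation step from level $n^*$ to level $m$ is fine for the forward containment $\cB_{m,d} \subseteq \overline{\mathrm{tower}(\FF)}$.

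The genuine gap is in case (2), at exactly the point you flag and then skip. You assert that Lemma \ref{lem-glueAlongLine} ``is invoked to ensure that the distinguished-line construction is intrinsic to $(p,X)$ and independent of the choice of ambient $Y$,'' but you never run that argument, and it is the crux of the proof: one must show that membership in $\cB_{n^*,d}$ is detected by the existence of a parameterized line section in $\FF$, independently of which plane in $\PP^N$ (or which ambient $(p,Y)$) realizes the pointed hypersurface. The paper does this by supposing a single $\ell \in \FF$ arises both from a line in a plane of $B_{n^*}$ and from a line in a submersive plane outside $B_{n^*}$, gluing the two configurations into one $(p',Y') \subset \PP^{2N}$ via Lemma \ref{lem-glueAlongLine}, rerunning the construction there to obtain $Z'$, and deriving a contradiction because the common line would have to both meet and miss $Z'$. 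Without this, your $\FF$ could a priori induce all of $\cU_{n,d}$ and conclusion (2) would be vacuous; and your proposed substitute --- that $\FF$ is proper because $\dim Z = N - n^* - \epsilon_{n^*}$ --- does not work, since $\cU_{1,d}$ has dimension only $d$ and the evaluation map from lines over $Z$ to $\cU_{1,d}$ can easily be dominant even when $Z$ has large codimension. The meaningful content of case (2), and what the applications need in order to invoke Proposition \ref{prop-d-1d-2cases}, is that $\overline{\mathrm{tower}(\FF)}$ coincides with $\overline{\cB_{n^*,d}}$ rather than merely containing it, and that comes precisely from the gluing argument you omitted.
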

\begin{proof}
Let $(p,Y)$ be a pointed hypersurface of degree $d$ in $\PP^N$ constructed in Remark \ref{rem-submersiveHyp} such that every member of $\cU_{m,d}$ is a parameterized linear section of $(p,Y)$ and furthermore, each fiber of $\phi_r$ contains an $(r,m)$-submersive point. Hence, for $r \leq n \leq m$, we have a surjective rational map $\phi_n: \GG_p(n,N) \dashrightarrow \cU_{n,d}$ from the space of parameterized $n$-dimensional linear spaces to degree $d$ pointed hypersurfaces in $\PP^n$. Let $B_n$ be the union of all components of $\phi_n^{-1}(\cB_{n,d})$ containing an $(n,m)$-submersive parameterized $n$-plane. We know that $B_n$ is nonempty by the construction of $Y$. By the definition of $(n,m)$-submersive, it follows that $B_{n+1}$ is a covering set for $B_n$ and each $B_n$ is equidimensional.

Since the map $\phi_m$ is surjective and $\cB_{m,d}$ has codimension at least 1, $B_m$ has codimension at least 1 in  $\GG_p(m,N)$. Suppose for all  $r < k \leq m$, $$\codim (B_{k-1} \subset \GG_p(k-1, N)) \geq \codim (B_{k} \subset \GG_p(k, N)) + 2,$$ then for $r \leq n \leq m$ we have $$\codim(B_{n} \subset \GG_p(n, N)) \geq 2(m-n)+1.$$
Since $B_{n}$ is a union of components of  $\phi_n^{-1}(\cB_{n,d})$ and dominates $\cB_{n,d}$, we conclude that $$\codim(\cB_{n,d} \subset \cU_{n,d}) \geq 2(m-n) +1.$$

Since the $\cB_{n,d}$ are $\PGL_{n+1}$-invariant, $B_n$ is a torsor over a family $\Gamma_n \subset \bG(n-1, N-1)$, viewed as $n$-planes in $\PP^N$ passing through $p$. Furthermore, $\Gamma_n$ by definition is the containing family of $\Gamma_{n-1} \subset \bG(n-2, N-1)$. 

By Lemma \ref{lem-clusteredNondecreasing}, if $\codim (\Gamma_{m-1} \subset \bG(m-2, N-1)) \geq \codim (\Gamma_{m} \subset \bG(m-1, N-1)) + 2,$ then
$\codim (\Gamma_{k} \subset \bG(k-1, N-1)) \geq \codim (\Gamma_{k+1} \subset \bG(k, N-1)) + 2$
  for all $k \leq m-1$. Thus, it remains to consider the case where 
$$\codim (\Gamma_{m-1} \subset \bG(m-2, N-1)) = \codim (\Gamma_{m} \subset \bG(m-1, N-1)) + 1.$$

By assumption, $\Gamma_{m-1} \subset \bG(m-2, N-1)$ is $1$-clustered. By Theorem \ref{thm-kPlanesCodim2}, there exists a variety $Z \subset \PP^{N-1}$ such that $\Gamma_{m-1}$ consists of the $(m-2)$-dimensional linear spaces that intersect $Z$. Hence, $B_{m-1}$ consists of all parameterizations of $(m-1)$-dimensional linear spaces that contain a member of the family of lines through $p$ and a point of  $Z$.

Suppose that none of the lines parameterized by $Z$ lie in $Y$. We prove that $\cB_{m-1,d}$ is a piece of the tower of varieties induced by a $\PGL_2$-invariant family $\FF \subset \cU_{1,d}$. Let $\psi: \GG_p(1, N) \to \PP^{N-1}$ be the morphism that maps a parameterized line to its image, where $\PP^{N-1}$ is viewed as the family of lines passing through $p$. Let $\FF = \phi_1(\psi^{-1}(Z)) \subset \cU_{1,d}$. It is clear that $\FF$ is $\PGL_2$-invariant.

By definition, we see that for each $\ell \in \FF$, there must be a parameterized line $\ell_1$ in $\PP^N$ lying in a parameterized $(m-1)$-plane $\Lambda_1 \in B_{m-1}$ with $\ell_1^* Y = \ell$. If $\cB_{m-1,d}$ is not in the tower of  varieties induced by $\FF$, then for some $\ell$, we can also find a parameterized line $\ell_2$ lying in a parameterized $(m-1)$-plane $\Lambda_2 \notin B_{m-1}$ with $\ell_2^*Y = \ell$. By Remark \ref{rem-submersiveHyp}, we may assume that $\Lambda_2$ is $(m-1,m)$-submersive. By Lemma \ref{lem-glueAlongLine}, we can find some $Y' \subset \PP^{2N}$ with a parameterized line $\ell'$ and parameterized $N$-planes $\Lambda'_1$ and $\Lambda'_2$ such that $\Lambda_1^{'*}(Y', \ell') = (Y, \ell_1)$ and $\Lambda_2^{'*}(Y',\ell') = (Y, \ell_2)$. Because $\Lambda_1$ and $\Lambda_2$ are  $(m-1,m)$-submersive so are $\Lambda_1'$ and $\Lambda_2'$.

We can run the argument at the beginning of the proof with the pointed hypersurface $(p,Y)$ in $\PP^N$ replaced by the pointed hypersurface $(p', Y')$ in $\PP^{2N}$. We let $\phi_n': \GG_{p'}(n,2N) \dashrightarrow \cU_{n,d}$ from the space of parameterized $n$-dimensional linear spaces to degree $d$ pointed hypersurfaces in $\PP^n$. Define $B_n'$ and $\Gamma_n'$ as before. We obtain a set $Z'$ such that $\Gamma_{m-1}'$ is the family of $(m-1)$-dimensional linear spaces intersecting $Z'$.  We know $\ell'$ either intersects $Z'$ or does not intersect $Z'$. Considering $\Lambda'_1$ we conclude that $\ell'$  intersects $Z'$. On the other hand, considering $\Lambda_2'$ we conclude that $\ell'$ does not intersect $Z'$ leading to a contradiction.

This concludes the proof that $\cB_{m-1,d}$ is part of a tower of varieties induced from $\FF$. If $Z$ also parameterizes some lines lying in the hypersurface $Y$, we may repeat the above argument to conclude that $\cB_{m-1,d}$ contains all pairs $(p,X) \in \cB_{m-1,d}$ such that there is a line in $X$ passing through $p$. If $Z$ parameterizes only lines contained in $Y$, then we are in case (3). Otherwise, for all $n \geq m-1$, the closure of $\cB_{n,d}$ is also induced by $\FF$. In particular, the closure of $\cB_{m,d}$ is induced by $\FF$.

\end{proof}

We now wish to understand towers of induced varieties from families $\FF \subset \cU_{1,d}$. We first need an elementary lemma about polynomials on $\PP^1$. Recall that a vector bundle $\bigoplus_i \OO(a_i)$ on $\PP^1$ is balanced if $|a_i - a_j| \leq 1$ for all $i$ and $j$.

\begin{lemma}\label{lem-balanced}
Let $p$ be a given degree $d$ polynomial on $\PP^1$ with at least 2 distinct roots, and let $T$ be the vanishing scheme. For general polynomials $f_2, \dots, f_n$ on $\PP^1$ of degree $d-1$, we get a map $\OO(1)^{n+1} \to \OO(d)$ given by $(\partial_s p, \partial_t p, f_2, \dots, f_n)$. Then  for $d=n-1$ or $n-2$, the kernel of the map $\OO(1)^{n+1} \to T$ given by the composition $\OO(1)^{n+1} \to \OO(d) \to T(d)$ is balanced.
\end{lemma}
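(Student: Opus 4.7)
Denote the kernel by $K$, which has rank $n+1$. From $0 \to K \to \OO(1)^{n+1} \to Q \to 0$, where $Q$ is the image in $\OO_T(d)$, one reads off $\deg K = n+1 - \operatorname{length}(Q)$. The first step is to show $Q = \OO_T(d)$ for general $f_j$, so $\deg K = n+1-d$. At a simple root $t$ of $p$ this is clear, because $\partial_s p(t)$ and $\partial_t p(t)$ cannot both vanish, so the image has the full local length $1$. At a root $t$ of multiplicity $m \geq 2$, the partials vanish to order exactly $m-1$ and so only reach the socle of $\OO_T(d)_t \cong \CC[u]/u^m$; but a general $f_j$ has $f_j(t) \neq 0$, and $f_j$ alone generates the full local length $m$.

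To prove balancedness it suffices to verify $H^0(K(-2)) = 0$ and $H^1(K(-1)) = 0$, which together force a splitting $K \cong \bigoplus \OO(a_i)$ with $a_i \in \{0,1\}$. The first is automatic, since $K \subseteq \OO(1)^{n+1}$ gives $H^0(K(-2)) \subseteq H^0(\OO(-1)^{n+1}) = 0$. For the second, tensor the defining sequence by $\OO(-1)$; because $Q$ is torsion, this gives $0 \to K(-1) \to \OO^{n+1} \to Q \to 0$, and the long exact sequence identifies $H^1(K(-1))$ with the cokernel of the evaluation map $\CC^{n+1} = H^0(\OO^{n+1}) \to H^0(Q) = \CC^d$.

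This evaluation is encoded by a $d \times (n+1)$ matrix $M$ whose $j$th column consists of the Taylor data of $\phi_j$ at the points of $T$ (to the appropriate local order), with $\phi_0 = \partial_s p$, $\phi_1 = \partial_t p$, and $\phi_j = f_j$ for $j \geq 2$. The essential input is that the global evaluation map $H^0(\OO(d-1)) \to H^0(\OO_T(d-1)) = \CC^d$ is an isomorphism, because no nonzero polynomial of degree $d-1$ is divisible by $p$. Consequently, as each $f_j$ varies over $H^0(\OO(d-1))$, its column of $M$ varies freely over $\CC^d$. Since $d \leq n-1$ in both cases $d \in \{n-1, n-2\}$, for general $f_2, \ldots, f_n$ the last $n-1$ columns of $M$ already span $\CC^d$ by themselves, so $M$ is surjective and $K$ is balanced. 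The main technical point is the multiple-root analysis needed to guarantee $Q = \OO_T(d)$ in the first step; once that is in place, the first two columns of $M$ play no role and the $f_j$ columns carry the spanning argument.
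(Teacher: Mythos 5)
Your proof is correct, but it takes a genuinely different route from the paper's. The paper argues by specialization: using the hypothesis that $p$ has two distinct roots to extract two distinct monomials from $\partial_s p,\partial_t p$, it chooses the $f_j$ to be monomials so that the $n+1$ defining polynomials become (up to coordinate change) all, or all but one, of the degree-$(d-1)$ monomials, and then reads off the kernel directly for that special map (implicitly invoking openness of the balanced condition to pass back to general $f_j$). You instead work with general $f_j$ throughout and reduce balancedness to the cohomological criterion $H^0(K(-2))=H^1(K(-1))=0$; the first vanishing is free from $K\subseteq \OO(1)^{n+1}$, and the second becomes surjectivity of $\CC^{n+1}\to H^0(\OO_T(d-1))\cong\CC^d$, which you get from the isomorphism $H^0(\OO(d-1))\xrightarrow{\sim}H^0(\OO_T(d-1))$ together with $n-1\geq d$. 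This is cleaner and more robust: it makes the genericity usage explicit rather than relying on an unstated semicontinuity step, and in fact it never uses the two-distinct-roots hypothesis, so it proves the slightly stronger statement for arbitrary $p$ of degree $d$. Two small remarks: your opening computation of $\deg K=n+1-d$ is a consistency check rather than a logical ingredient (the two vanishings alone force all $a_i\in\{0,1\}$), and your closing claim that the multiple-root analysis is "the main technical point" is a bit misplaced, since the spanning of $\CC^d$ by the columns of the $f_j$ already forces $Q=\OO_T(d)$; also, at a multiple root the partials vanish to order \emph{at least} $m-1$ (one of them can vanish identically to higher order when the root is a coordinate point), though only the lower bound is needed for your argument.
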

\begin{proof}
Since $p$ has at least 2 distinct roots, $\partial_s p$ and $\partial_t p$ have two distinct monomials $f_0$, $f_1$ with nonzero coefficients. Choose monomials $f_2, \dots, f_n$ distinct from $f_0, f_1$ such that the set $\{f_i\}_{i=0}^n$  contains $s^{d-1}, t^{d-1}$. Then up to a change of coordinates the map is given by all the monomials of degree $d$ or all but one of the monomials of degree $d$. In either case, it is easy to see that the kernel is balanced. 
\end{proof}

\begin{proposition}
\label{prop-d-1d-2cases}
Let $\FF \subset \cU_{1,d}$ be  a $\PGL_2$-invariant, closed irreducible family, and let $\cB_{n,d}$ be the tower of varieties induced by $\FF$. 
 \begin{enumerate} 
\item If $\cB_{d-1,d} \neq \cU_{d-1,d}$, then $\FF$ parameterizes the space of polynomials on $\PP^1$ with at most one root.
\item If $\cB_{d-2,d} \neq \cU_{d-2,d}$, then $\FF$ parameterizes a union of $\PGL_2$-orbits in the space of polynomials on $\PP^1$ with at most two distinct roots.  
\end{enumerate}
\end{proposition}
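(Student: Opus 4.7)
The plan is to use the $\PGL_2$-invariance of $\FF$ to reduce to a question about the fiber
\[
\FF_p := \FF \cap \{(p,X) \in \cU_{1,d}\}
\]
over a fixed point $p \in \PP^1$. This fiber sits inside $\cU_{1,d}^p \cong \PP^{d-1}$, the projective space of degree-$d$ binary forms vanishing at $p$, which I will parametrize as $F = t \cdot g$ with $g$ a binary form of degree $d-1$. Then $\FF_p$ is closed, irreducible, and invariant under the stabilizer $\operatorname{Aff} \subset \PGL_2$ of $p$, acting by $(a,b)\cdot g(s,t) = g(as+bt,\,t)$. For a generic $(p,X) \in \cU_{n,d}$ with local expansion $F = F_1 + F_2 + \cdots + F_d$ at $p$ ($F_i$ homogeneous of degree $i$), the restriction of $X$ to the line through $p$ in direction $v$ has equation $\sum_i F_i(v)\,s^{d-i}t^i$, giving the rational map
\[
\Psi_X \colon \PP^{n-1} \dashrightarrow \PP^{d-1}, \qquad v \longmapsto [F_1(v) : F_2(v) : \cdots : F_d(v)],
\]
and $(p,X) \in \cB_{n,d}$ if and only if $\Psi_X(\PP^{n-1})$ meets $\FF_p$.

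The first main step will be to bound $\dim \FF_p$. For generic $X$ the $F_i$'s are generic polynomials of their respective degrees, so $\Psi_X$ is generically finite onto an image of dimension $\min(n-1,\,d-1)$. I then invoke the standard fact that two closed irreducible subvarieties $A, B \subset \PP^N$ with $\dim A + \dim B \geq N$ always meet. If a generic $(p,X)$ lies outside $\cB_{n,d}$, this forces $(n-1) + \dim \FF_p < d - 1$: taking $n = d-1$ gives $\dim \FF_p = 0$, and taking $n = d-2$ gives $\dim \FF_p \leq 1$.

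The second main step is to classify closed irreducible $\operatorname{Aff}$-invariant subvarieties of $\PP^{d-1}$ of dimension $\leq 1$. First, the unipotent subgroup $\mathbb{G}_a = \{(1,b)\}$ fixes only $[t^{d-1}]$: differentiating the fixed-point condition at $b=0$ yields $t\,\partial_s g = c \cdot g$, whose only solution is $g = t^{d-1}$. Hence $[t^{d-1}]$ is the unique $\operatorname{Aff}$-fixed point. A $1$-dimensional irreducible $\operatorname{Aff}$-invariant subvariety is the closure of a $1$-dimensional orbit, whose connected $1$-dimensional stabilizer cannot be $\mathbb{G}_a$, so must be a conjugate of the diagonal torus $\mathbb{G}_m = \{(a,0)\}$. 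Its fixed points in $\PP^{d-1}$ are the monomials $[s^{d-1-i}t^i]$, and their $\operatorname{Aff}$-orbit closures are the rational curves
\[
V_i := \bigl\{\,[(\alpha s + \beta t)^{d-1-i}\,t^i] : [\alpha:\beta] \in \PP^1\,\bigr\}, \qquad 0 \leq i \leq d-2.
\]

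Putting it together: for (1), $\FF_p$ is a single point which must be $[t^{d-1}]$, corresponding to the divisor $d\cdot p$; thus $\FF$ is the $\PGL_2$-orbit closure of $(p, d\cdot p)$, the locus of degree-$d$ polynomials with at most one distinct root. For (2), either $\dim \FF_p = 0$ (case (1)) or $\FF_p = V_i$ for some $i$, in which case the corresponding divisor $V(tg) = t^{i+1}(\alpha s + \beta t)^{d-1-i}$ has at most two distinct roots, and $\FF$ is the resulting union of $\PGL_2$-orbits within the locus of polynomials with at most two distinct roots. The one step requiring real work is the generic finiteness of $\Psi_X$ underpinning the dimension bound; once that is in hand, everything reduces to the projective intersection lemma and the explicit orbit geometry of $\operatorname{Aff}$ on $\PP^{d-1}$.
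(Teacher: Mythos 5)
Your overall strategy is sound and genuinely different from the paper's. The paper works with the incidence correspondence of triples $(p,\ell,X)$ and shows, via the splitting type of $\ell^*T_{\PP^n}(-\log X)$ computed from Lemma \ref{lem-balanced}, that whenever the binary form has at least two (resp.\ three) distinct roots the corresponding lines sweep out a general hypersurface in $\PP^{d-1}$ (resp.\ $\PP^{d-2}$); the classification of $\FF$ is then read off by contraposition. You instead fix the fiber $\FF_p$, bound its dimension by an intersection-theoretic count, and then classify low-dimensional $\operatorname{Aff}$-invariant subvarieties of $\PP^{d-1}$. Your classification step is correct and clean (the unique $\operatorname{Aff}$-fixed point is $[t^{d-1}]$; a one-dimensional irreducible invariant subvariety is an orbit closure $V_i$; connectedness of $\operatorname{Aff}$ lets you apply this to each component of $\FF_p$ even if the fiber is reducible), and it buys a more elementary argument with no vector bundle computations.

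However, the dimension bound as you state it has a genuine gap. The assignment $v\mapsto[F_1(v):\cdots:F_d(v)]$ does not descend to a rational map $\PP^{n-1}\dashrightarrow\PP^{d-1}$, because the $F_i$ have different degrees: rescaling $v\mapsto\lambda v$ moves the image point along the weighted torus orbit $[\lambda F_1(v):\cdots:\lambda^d F_d(v)]$. If you restrict $v$ to an affine chart to get an honest map, its image $S$ is constructible but not closed, and you cannot apply the ``complementary dimensions meet'' lemma to $\overline{S}$ and then conclude anything about $S$: indeed, letting $\lambda\to\infty$ shows that $\overline{S}$ \emph{always} contains the boundary point $[0:\cdots:0:1]=[t^{d-1}]$, which is exactly $\FF_p$ in case (1). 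So $\overline{S}\cap\FF_p\neq\emptyset$ while $S\cap\FF_p=\emptyset$, and the implication ``generic $(p,X)\notin\cB_{n,d}$ $\Rightarrow$ $(n-1)+\dim\FF_p<d-1$'' does not follow from the projective intersection lemma as invoked.

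The fix is to run the intersection argument on affine cones. Let $\Phi\colon\mathbb{A}^n\to\mathbb{A}^d$, $v\mapsto(F_1(v),\dots,F_d(v))$. For generic $X$ and $n\le d$ the forms $F_1,\dots,F_d$ have no common zero in $\PP^{n-1}$, so $\Phi$ is a finite morphism and $A:=\Phi(\mathbb{A}^n)$ is a closed irreducible subvariety of dimension $n$ containing the origin and invariant under the $\mathbb{G}_m$-action with weights $(1,\dots,d)$. The affine cone $\widehat{\FF}_p$ is likewise invariant and contains the origin, and has dimension $\dim\FF_p+1$. Two such subvarieties with $\dim A+\dim\widehat{\FF}_p\ge d+1$ intersect in a positive-dimensional set through the origin, hence meet away from the origin, which produces an actual line through $p$ with section in $\FF$. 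Contraposing gives precisely your inequality $\dim\FF_p\le d-n-1$, i.e.\ $\dim\FF_p\le 0$ for $n=d-1$ and $\dim\FF_p\le 1$ for $n=d-2$, after which the rest of your argument goes through.
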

\begin{proof}
Since $\FF$  is a union of $\PGL_2$-orbits, it suffices to study a single $\PGL_2$-orbit. Let $a$ be the dimension of the $PGL_2$-stabilizer of a general element of $\FF$, so that $\dim \FF = 3-a$. We claim that except for the situations specified in the proposition, $\cB_{n,d} = \cU_{n,d}$ for $n = d-1, d-2$. Let $N = \binom{n+d}{d}-1$. Consider the incidence correspondence 
$$I = \{ (p,\ell, X) | p \in X, p \in \ell, \ell^* (p,X) \in \FF\} \subset \PP^n \times \GG_p(1,n) \times \PP^{N}$$
parameterizing triples of a point $p$, a parameterized line $\ell$ through $p$ and a hypersurface of degree $d$ in $\PP^n$ such that the pullback of the pointed hypersurface $(p,X)$ by $\ell$ is in the family $\FF$. Then $$\dim I = N -d +2n-2 + \dim \FF,$$ since for each element of $\FF$, there is a  $(2n-2)$-dimensional family of lines in $\PP^n$ it could be identified with, and for each such line, there is an $(N-d)$-dimensional family of hypersurfaces intersecting the given line as prescribed. Consider the map $\alpha: I \to \cU_{n,d}$ defined by projection to the first and third coordinates. We claim that for $n=d-1$ or $n=d-2$, $\alpha$ dominates $\cU_{n,d}$ unless we are in one of the two situations described in the proposition. To see that $\alpha$ dominates, it suffices to consider the dimension of a general fiber. We start with the case $n=d-1$.

Consider the pullback of the standard exact sequence to the line $\ell$
$$0 \to \ell^* T_{\PP^n} (- \log X) \to \ell^* T_{\PP^n} \to \ell^* \OO_X(X) \to 0.$$ The tangent space to a fiber of $\alpha$ at a general point $(p,\ell,X)$ of $I$ is the space of sections of $T_{\PP^n}(-\log X)$ whose image in $T_{\PP^n}$ vanish at $p$. This is given by the global sections of the sheaf $G$, where $G$ is the kernel of the map  $$\ell^* T_{\PP^n} \to \ell^* (\OO_X(X) \oplus T_{\PP^n}|_p).$$ Alternatively,  $G$ is an elementary up modification given by  $$0 \to \ell^* T_{\PP^n}(-\log X)(-p)) \to G \to \OO_p \to 0.$$ 

Thus, we need to calculate the splitting type of $\ell^* T_{\PP^n}(-\log X)$. If we can show that $\ell^* T_{\PP^n}(-\log X)$ is globally generated, then curves of that deformation class will certainly sweep out $X$, and the result on $\alpha$ will follow.

Consider a fixed polynomial $f_0$ on $\PP^1$ with at least two distinct roots. We wish to analyze when a general hypersurface $X \subset \PP^n$ with $V(f_0)$ as a linear section satisfies that lines with intersection $V(f_0)$ sweep out $\PP^n$. Consider the following diagram.

$$ \xymatrix{  & 0 & 0 & & \\ 0 \ar[r] & T_{\PP^n} (- \log X) \ar[u]\ar[r]  & T_{\PP^n}  \ar[u] \ar[r]^{\phi} &  \OO_X(X) \ar[r] & 0  \\  0 \ar[r]  & E \ar[u] \ar[r] & \OO_{\PP^n}(1)^{n+1} \ar[u]\ar[r]^{\psi} & \OO_X(X) \ar[u]_= \ar[r] & 0 \\ & \OO \ar[u] \ar[r]_= & \OO \ar[u] & \\ & 0 \ar[u] & 0 \ar[u] & }, $$

The map $\psi$ is given by the partial derivatives of the equation for $X$. Let $X$ be general among those having $V(f_0)$ as a linear section. Then since $f_0$ has at least two distinct roots, by Lemma \ref{lem-balanced}, the kernel $E$  restricts to the line $\ell$ as a balanced vector bundle. Since $n=d-1$, it follows that $E|_\ell$ and hence, $T_{\PP^n} (- \log X)|{\ell}$ must consist only of $\OO$ factors, as required.

Now suppose that $n=d-2$ and that $V(f_0)$ consists of at least three distinct points. Then by the calculation of the $n=d-1$ case, a general point of a general hypersurface $X$ in $\PP^{d-1}$ has a 1-parameter family of line sections through $p$ that are in $\FF$. If $X'$ is a general hyperplane section of $X$, $X$ has finitely many parameterized lines through a general point whose linear section is in $\FF$. The result follows.
\end{proof}

\section{Understanding $Z_1$ and $Z_2$}
\label{sec-Z1Z2}
Recall that $Z_i$ is the locus of points on a hypersurface $X$ swept out by lines that intersect $X$ in at most $i$ points. In this section we study the algebraic hyperbolicity of $Z_1$ and $Z_2$. We first introduce certain incidence correspondences and then study their images in $X$.

Let $\lambda$ be a partition of $d$ into $i$ parts. Consider the incidence correspondence $$I_{\lambda,d,n} = \{ (\ell, p_1, \dots, p_i, X) | \ell \cap X = \sum_{j=1}^i \lambda_j p_j \}$$ parameterizing lines $\ell$ with $i$ distinct marked points and hypersurfaces of degree $d$ whose restriction to the line $\ell$ has multiplicities $\lambda_j$ at the point $p_j$.

\begin{lemma}
\label{lem-irrIlambda}
$I_{\lambda,d,n}$ is irreducible of dimension $2n-3+i+\binom{n+d}{d}-d$. It follows that if $d > 2n-2+i $, then the $Z_i$ is empty for a general hypersurface. If $d > n-1+i$, then $Z_i$ is not equal to $X$ for the general hypersurface. 
\end{lemma}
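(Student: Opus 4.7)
The plan is to compute $\dim I_{\lambda,d,n}$ via the projection $\pi_1: I_{\lambda,d,n} \to \Sigma$ to the space $\Sigma$ of tuples $(\ell, p_1, \dots, p_i)$ with $\ell$ a line in $\PP^n$ and $p_1, \dots, p_i$ distinct ordered points of $\ell$. The space $\Sigma$ is an open subset of a bundle over $\bG(1,n)$ whose fiber is the configuration space of $i$ ordered points on $\PP^1 \cong \ell$, and so $\Sigma$ is irreducible of dimension $(2n-2)+i$. The fiber of $\pi_1$ over $(\ell,p_1,\dots,p_i)$ parameterizes hypersurfaces $X$ for which $X|_\ell$ is a nonzero scalar multiple of $\prod_j (t-p_j)^{\lambda_j}$. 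Since the restriction $H^0(\PP^n,\OO(d)) \to H^0(\ell,\OO(d))$ is surjective onto a $(d+1)$-dimensional target and we are cutting out the preimage of a one-dimensional subspace, this fiber is a dense open subset of a projective linear subspace of $\PP^N$ of dimension $N-d$, where $N=\binom{n+d}{d}-1$. Because $\pi_1$ has irreducible base and irreducible equidimensional fibers, $I_{\lambda,d,n}$ is irreducible of dimension $(2n-2+i)+(N-d) = 2n-3+i+\binom{n+d}{d}-d$.

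For the emptiness corollary, I would compose with the projection $\pi_2: I_{\lambda,d,n} \to \PP^N$ to the hypersurface parameter space. The locus $Z_i \subset X$ is, by definition, the union over partitions $\lambda' \vdash d$ with at most $i$ parts of the points swept out by lines meeting $X$ in the pattern dictated by $\lambda'$. For $Z_i = \emptyset$ on a general hypersurface, it suffices that $\pi_2$ be non-dominant for every such $\lambda'$, equivalently that $\dim I_{\lambda',d,n} < N$. Since $\dim I_{\lambda',d,n}$ depends only on the number of parts $j$ of $\lambda'$ and is monotone increasing in $j$, the binding case is $j=i$, where $\dim I - N = 2n-2+i-d$; this is negative exactly when $d > 2n-2+i$, and there are only finitely many partitions to consider, so a very general hypersurface lies outside the finite union of images.

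For the final statement, I further project to the universal hypersurface via $(\ell, p_1, \dots, p_i, X) \mapsto (p_j, X)$. Provided $\pi_2$ is dominant, its generic fiber has dimension $\dim I - N = 2n-2+i-d$, so for generic $X$ the image of this fiber inside $X \subset \PP^n$ has dimension at most $2n-2+i-d$. Taking the union over the finitely many partitions $\lambda'$ with at most $i$ parts and the finitely many choices of which marked point to record, $Z_i \subset X$ has dimension at most $2n-2+i-d$, which is strictly less than $\dim X = n-1$ precisely when $d > n-1+i$, yielding the desired inequality $Z_i \neq X$. The proof is essentially routine dimension counting; the only point that requires care is the uniform treatment over all admissible partitions, which goes through because the dimension bound depends only on the number of parts and is monotone in it.
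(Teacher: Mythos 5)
Your proof is correct and follows essentially the same route as the paper's: fiber the incidence correspondence over the space of lines with $i$ marked points, observe the fibers are (open subsets of) linear spaces of codimension $d$, and deduce the two corollaries by comparing $\dim I_{\lambda,d,n}$ with $N$ and with $N+n-1$. The paper's own proof records only the dimension count and leaves the projections to $\PP^N$ and to $\cU_{n,d}$ implicit; your write-up supplies those routine details, including the correct observation that the bound is monotone in the number of parts so only the $i$-part partitions are binding.
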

\begin{proof}
The incidence correspondence $I_{\lambda,d,n}$ maps to the space of lines with $i$ marked points. The fibers are linear spaces of codimension $d$ in the space of polynomials of degree $d$ in $n+1$ variables. We conclude that $I_{\lambda,d,n}$ is irreducible of dimension $2n-3+i + \binom{d+n}{d} - d$. 
\end{proof}

We are particularly interested in $I_{\lambda}$ where $\lambda$ is the singleton partition $(d)$ or the two-part partition $(r,d-r)$. Following Clemens and Ran \cite{ClemensRan}, we introduce the osculation varieties. Let $V$ be the space of nonzero homogeneous forms of degree $d$ in $\PP^n$.  Let $$\Delta_r := \{ (\ell, x, F) | \ell \cap X \geq rx \} \subset \bG(1, n) \times \PP^n \times V$$ be the $r$th osculation variety parameterizing lines that have contact of order at least $r$ to a pointed hypersurface at the marked point. By Lemma \ref{lem-irrIlambda}, $\Delta_r$ is irreducible of dimension $2n-1-r+\binom{n+d}{d}$. Given a hypersurface $X= V(F)$, let $\Delta_r(X)$ be the fiber of $\Delta_r$ over $F \in V$.

We would also like to study a variant of $\Delta_r$. We introduce the double osculation varieties
 $$\Delta_{r,s} := \{ (\ell, x, y, F) | \ell \cap X \geq rx +sy \} \subset \bG(1, n) \times \PP^n \times \PP^n \times V$$
 parameterizing lines that have contact of order $r$ and $s$ at two points of the hypersurface. When the points coincide, we require that the contact order be $r+s$. As above, for $X = V(F)$ let $\Delta_{r,s}(X)$ be the fiber of $\Delta_{r,s}$ over $F \in V$. 
 
We now construct $\Delta_{r,s}$ explicitly and compute its canonical bundle.
 
\begin{theorem}\label{thm-generaltype}
The varieties $\Delta_{r}$ and $\Delta_{r,s}$ are smooth and irreducible. For a general hypersurface $X$, $\Delta_d(X)$ and $\Delta_{r,d-r}(X)$ are of general type when $d \geq 2\sqrt{n}+1$. Moreover, for $d \geq 2 \sqrt{n} + 1$ and any family of irreducible curves $C$ that sweep out a Zariski open set in $\Delta_d$ or $\Delta_{r,d-r}$, we have $$2g(C) -2 \geq (d-2) \ C \cdot H,$$ where $H$ is the hyperplane class in $\PP^n$. 
\end{theorem}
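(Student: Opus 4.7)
The approach is to realize $\Delta_r$ and $\Delta_{r,s}$ as total spaces of explicit vector bundles over point-line and two-pointed incidence varieties, compute their canonical bundles by combining adjunction with a jet-bundle filtration, and derive all positivity statements from the nef-cone structure of these incidence varieties. Concretely, let $F := \{(x, \ell) \in \PP^n \times \bG(1, n) : x \in \ell\}$ be the incidence variety, and let $Q_r$ be the rank-$r$ bundle on $F$ with fiber $H^0(\ell, \OO_\ell(d))/H^0(\ell, \mathfrak{m}_x^r \OO_\ell(d))$. The natural surjection $V \otimes \OO_F \to Q_r$, sending a form to its jet along $\ell$ at $x$, has kernel $E_r$, whose total space is $\Delta_r$ as a vector bundle over $F$. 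This yields smoothness and irreducibility, and the same argument over the two-pointed incidence $F_2 = \{(x, y, \ell) : x, y \in \ell\}$ (with the scheme-theoretic diagonal giving an order-$(r+s)$ jet condition) handles $\Delta_{r,s}$.

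For $\Delta_d(X) \subset F$ with $X$ generic, adjunction gives $K_{\Delta_d(X)} = (K_F + c_1(Q_d))|_{\Delta_d(X)}$. Filtering $Q_d$ by powers of $\mathfrak{m}_x$ produces graded line bundles $(T_x^\vee \ell)^{\otimes i} \otimes p^* \OO(d)$ for $i = 0, \ldots, d-1$. Under the identification $F \cong \PP(T_{\PP^n})$, the cotangent $T_x^\vee \ell$ equals the relative hyperplane class $\xi$, so with $H := p^* \OO_{\PP^n}(1)$ a Chern-class sum and the relative Euler sequence give
$$c_1(Q_d) = d^2 H + \tbinom{d}{2}\xi, \qquad K_F = -2(n+1)H - n\xi.$$
The nef cone of $F$ is spanned by $H$ and the Pl\"ucker pullback $q^*L = 2H + \xi$, so $aH + b\xi$ is nef iff $b \geq 0$ and $a \geq 2b$. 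For $K_F + c_1(Q_d) - (d-2)H = (d^2 - d - 2n)H + (\binom{d}{2} - n)\xi$, both conditions collapse to $\binom{d}{2} \geq n$, implied by $d \geq 2\sqrt{n}+1$. Hence $K_{\Delta_d(X)}$ is ample (yielding general type), and the class $K_F + c_1(Q_d) - (d-2)H$ is nef on $F$.

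The parallel computation on $F_2$ uses $\tau_x = L - 2H_x$ and $\tau_y = L - 2H_y$ (derived from $\xi = q^*L - 2H$ on $F$) to obtain $K_{F_2} = -2H_x - 2H_y + (1-n)L$ and $c_1(Q_{r,d-r}) = r(s+1)H_x + s(r+1)H_y + \bigl(\binom{r}{2} + \binom{s}{2}\bigr)L$, with $s = d-r$. After subtracting $(d-2)H_x$, nefness as a nonnegative combination of the three nef generators $H_x, H_y, L$ reduces to $\binom{r}{2} + \binom{s}{2} \geq n-1$, whose minimum over $r+s = d$ is $d(d-2)/4$; the bound $(d-1)^2 \geq 4n$, equivalent to $d \geq 2\sqrt{n}+1$, suffices. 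For the genus inequality, the standard covering-family argument gives $2g(C) - 2 \geq K_{\Delta_d} \cdot C$ for $C$ a general member of a sweeping family (the normal bundle $N_{C/\Delta_d}$ is globally generated by the family, so $\det N$ has nonnegative degree, and adjunction concludes). Since $\Delta_d \to F$ is a vector bundle, $K_{\Delta_d}$ pulls back from $K_F + c_1(Q_d)$, so $(K_{\Delta_d} - (d-2)H) \cdot C = (K_F + c_1(Q_d) - (d-2)H) \cdot \pi_* C \geq 0$ by the established nefness. The case $\Delta_{r,d-r}$ is handled identically on $F_2$.

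\textbf{Main obstacle.} The intersection-theoretic bookkeeping on $F_2$ is the most delicate step: the Picard group is rank three, the diagonal $x = y$ requires scheme-theoretic care, and the nefness condition must be verified uniformly across the splits $r + s = d$. The numerical bound $d \geq 2\sqrt{n}+1$ is precisely the threshold making both the one-pointed and two-pointed positivity conditions hold simultaneously.
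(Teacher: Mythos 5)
Your overall route is the same as the paper's: realize $\Delta_r$ and $\Delta_{r,s}$ via jet bundles over the one- and two-pointed incidence varieties, compute canonical classes by adjunction through the order-of-vanishing filtration, and convert positivity of $K-(d-2)H$ into the genus bound via the globally-generated-normal-sheaf argument. Your one-pointed computation is correct and agrees with the paper's (in Pl\"ucker/hyperplane coordinates, $K_{\Delta_d(X)}=(\binom{d}{2}-n)L+(d-2)H$), and your explicit identification of the nef cone of $F$ together with the observation that $K_{\Delta_d}-(d-2)H=(\binom{d}{2}-n)L$ is nef is a genuine completion of a step the paper compresses into ``the final statement follows.''

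The gap sits exactly where you flagged the delicacy: the two-pointed Chern class. Your $c_1(Q_{r,d-r})=(\binom{r}{2}+\binom{s}{2})L+r(s+1)H_x+s(r+1)H_y$ equals $c_1(Q_r^{(x)})+c_1(Q_s^{(y)})$, i.e.\ it treats the order-$r$ condition at $x$ and the order-$s$ condition at $y$ as independent. They are not independent along the diagonal: the natural map $Q_{r,s}\to Q_r^{(x)}\oplus Q_s^{(y)}$ drops rank on $\{x=y\}$. Pushing forward $0\to\OO_\ell(d)(-rx-sy)\to\OO_\ell(d)\to\OO_{rx+sy}(d)\to 0$ along the universal line gives instead $c_1(Q_{r,d-r})=\binom{d}{2}L+rH_x+sH_y$, which is your class minus $rs[\Delta]$, where $[\Delta]=H_x+H_y-L$ is the (effective) diagonal. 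A concrete check: for $n=2$, $d=2$, $r=s=1$ and $X$ a smooth conic, $\Delta_{1,1}(X)\cong X\times X\cong\PP^1\times\PP^1$ has $K=(-2,-2)=-H_x-H_y$, whereas your formula yields $K_{F_2}+c_1(Q_{1,1})=-L=(-1,-1)$. Since the true canonical class is your claimed one minus an effective class, the asserted nefness of $K_{\Delta_{r,d-r}}-(d-2)H_x$ and the general-type conclusion do not follow as written; indeed the corrected class $(\binom{d}{2}+1-n)L+(r-2)H_x+(s-2)H_y$ has negative $H_x$-coefficient when $r=1$, and after subtracting $(d-2)H_x$ its pairing with a curve meeting the diagonal acquires the term $-s[\Delta]\cdot C\le 0$. (The paper's displayed graded pieces $\OO(i,0,d-2i)$ omit the same $R_x^r$-twist, so you have faithfully reproduced its computation; but the diagonal correction must be confronted --- e.g.\ by working on the blow-up of $\Delta$, or by bounding $[\Delta]\cdot C$ for sweeping families --- before the $\Delta_{r,d-r}$ half of the statement is actually established.)
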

\begin{proof}
Smoothness and irreducibility comes from the realization of $\Delta_r$ and $\Delta_{r,s}$ as bundles over the space of pointed lines in $\PP^n$. To prove the desired results about the canonical bundle, we explicitly construct the spaces as vanishing loci of sections of vector bundles and repeatedly apply adjunction.

Let $\pi_i$ denote the projection from $\bG(1,n) \times \PP^n \times V$ or $\bG(1, n) \times \PP^n \times \PP^n \times V$ to the $i$-th factor. Forgetting the point $y$, defines a projection $\varphi: \Delta_{r,s} \to \Delta_r$. We write $(a_1,a_2)$ for the bundle $\pi_1^* \OO(a_1) \otimes \pi_2^* \OO(a_2)$ on $\bG(1, n) \times \PP^n \times V$ and $(a_1,a_2,a_3)$ for the bundle $\pi_1^* \OO(a_1) \otimes \pi_2^* \OO(a_2) \otimes \pi_3^* \OO(a_3)$ on $\bG(1, n) \times \PP^n \times \PP^n \times V$.
 
Recall the computation of the canonical bundle of $\Delta_r$ from \cite{ClemensRan}. There is a tautological bundle $\OO_V(-1)$ that associates to each point of $V$ given by a degree $d$ polynomial $F$, the $1$-dimensional subspace spanned by $F$. Note that this bundle is canonically trivial since it has a nowhere vanishing section. The pullback $\pi_3^* \OO_V(-1)$ maps to  $\pi_2^* \OO_{\PP^n}(d)$ by evaluation $$e: \pi_3^* \OO_V(-1) \to \pi_2^* \OO_{\PP^n}(d).$$ 
 Similarly, let $$0 \to S \to \OO_{\bG(1,n)}^{n+1} \to Q \to 0$$ denote the tautological sequence on $\bG(1,n)$. There is a natural map
 $$f: \pi_2^* \OO_{\PP^n}(-1) \to \pi_1^* Q$$ given by the composition of the natural map $\pi_2^* \OO_{\PP^n}(-1) \to \OO^{n+1}$ with the projection onto $Q$.

We now start applying adjunction. First, $\Delta_1$  is the common zero locus of the two maps $e$ and $f$. Hence, by adjunction we see that
 $$\omega_{\Delta_1} = \omega_{\bG(1,n)\times \PP^n \times V} \otimes \pi_1^* \det(Q) \otimes \pi_2^* \OO_{\PP^n}(d+n-1) = \OO_{\Delta_1} (-n, d-2)  $$

On $\Delta_1$, the map $\pi_2^*\OO(-1) \to \OO^{n+1}$ factors through $S$, so there is an exact sequence
$$0 \to \pi_2^* \OO_{\PP^n}(-1) \to \pi_1^* S \to R^{\vee} \to 0, $$ where $R^{\vee}$ has rank $1$. Hence $$R = \OO_{\bG(1,n)\times \PP^n \times V}(1,-1).$$

There is a filtration on $\pi_1^* \Sym^d S^{\vee}$ given by polynomials of degree $d$ vanishing to order $i$ at $x$ modulo those that vanish to order $i+1$ at $x$. 
$$\frac{F^i}{F^{i+1}} = \pi_2^* \OO_{\PP^n}(d-i) \otimes R^i = \OO_{\bG(1,n)\times \PP^n \times V}(i, d-2i).$$

$\Delta_r$ is the zero scheme of the natural map $$\OO_V(-1) \to \frac{F^1}{F^r}.$$
Hence, by adjunction
$$\omega_{\Delta_r} = \OO_{\Delta_r} \left(\frac{r(r-1)}{2}-n, rd- r(r-1) -2\right).$$
In particular, if $d \geq \sqrt{2n}+1$, then $\Delta_d$ is of general type.

We can work out the class of $\Delta_{r,s}$ similarly, using adjunction on $\Delta_1 \times \PP^n \subset \bG(1,n)\times \PP^n \times \PP^n \times V$. We have a map $$\pi_3^* \OO_{\PP^n}(-1) \to \pi_1^* Q$$ and $$\OO_V(-1) \to \pi_3^* \OO_{\PP^n}(d).$$ The locus $\Delta_{r,1}$ is defined by the common vanishing of this locus on $\Delta_r$. Hence, we have 
$$\omega_{\Delta_{r,1}} = \OO_{\Delta_{r,1}} \left(\frac{r(r-1)}{2}-n+1, rd- r(r-1) -2, d-2\right).$$

On $\Delta_r$ there is a map $\OO_V(-1) \to F^r$. We have a filtration on $R^r \otimes \Sym^{d-r} S^{\vee}$ given by the vanishing order of the polynomials at $y$.
$$\frac{G^i}{G^{i+1}} = \OO (i, 0, d-2i)$$
$\Delta_{r,s}$ is the zero locus of the map to $G^1/G^s$.

So the canonical of $\Delta_{r,s}$ is given by 
$$\omega_{\Delta_{r,s}} = \left(\frac{r(r-1)}{2}+\frac{s(s-1)}{2}-n, r(d- r+1) -2, s(d - s+1) -2\right).$$
In particular, if $d \geq 2 \sqrt{n}+1$, then $\Delta_{r, d-r}$ is of general type for all $1 \leq r \leq d$. 

If deformations of $C$ sweep out $\Delta_d$, respectively $\Delta_{r, d-r}$, then the normal bundle $N_{C/\Delta_d}$, respectively  $N_{C/\Delta_{r,d-r}}$, is globally generated hence has nonnegative degree. By the standard exact sequence for the normal bundle we conclude that
$$2g(C) -2 \geq \omega_{\Delta_d} \cdot C \quad (\mbox{resp.} \ 2g(C) -2 \geq \omega_{\Delta_{r,d-r}} \cdot C).$$ The final statement follows. 
\end{proof}


There is a natural map $\alpha: \Delta_d(X) \to X$ whose image is $Z_1$ and natural maps $\beta_k: \Delta_{k,d-k}(X) \to X$ for each $1 \leq k \leq d-1$ whose images combine to form $Z_2$. In terms of the incidence correspondences defining $\Delta_d$ and $\Delta_{r, d-r}$, these maps are induced by $\pi_2$, the projection to the second factor. 

\begin{proposition}\label{prop-injective}
For $d \geq \frac{3n-1}{2}$, $\alpha$ is injective away from $Z_L$, the locus of lines on $X$ swept out by lines. For $d \geq \frac{3n+1}{2}$, the $\beta_k$ are injective away from $Z_L$.
\end{proposition}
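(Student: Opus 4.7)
The plan is to apply Theorem~\ref{thm-MainIncidenceThm} (the Main Incidence Theorem, MIT) to the $\PGL$-invariant locus in $\cU_{n,d}$ where $\alpha$ (respectively $\beta_k$) fails to be injective outside of $Z_L$. I describe the argument for $\alpha$; the argument for $\beta_k$ is parallel, with the incidence count carrying two extra marked points $y_1, y_2$.

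Let $\cB^{\alpha}_{r,d} \subset \cU_{r,d}$ denote the closure of the set of $(p,X)$ admitting two distinct lines $\ell_1, \ell_2 \subset \PP^r$ through $p$, neither contained in $X$, with $\ell_i \cap X \geq d\,p$ for $i=1,2$. Since two such lines span a parameterized $2$-plane, $\{\cB^{\alpha}_{n,d}\}_{n \geq 2}$ is the tower of induced subvarieties from $\cB^{\alpha}_{2,d}$. A direct incidence count on $\{(p, \ell_1, \ell_2, X)\}$ yields $\codim(\cB^{\alpha}_{m,d} \subset \cU_{m,d}) \geq 2d - 2m$, so $\cB^{\alpha}_{m,d}$ is a proper subvariety for $m \leq d - 1$. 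The aim is to show $\codim(\cB^{\alpha}_{n,d}) \geq n$, which forces the image in the space of degree $d$ forms to have positive codimension, so that for a very general $X$ no point $p \notin Z_L(X)$ has an $\alpha$-fiber of size at least $2$.

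Apply MIT to $\{\cB^{\alpha}_{n,d}\}$ with $r = 2$ and $m = d - 1$. In case~(3), the tower collapses into the $Z_L$-tower at level $m$, which by the tower structure forces $\cB^{\alpha}_{n,d} \subset Z_L$-tower for all $n$ in the tower, giving the desired conclusion. In case~(2), Proposition~\ref{prop-d-1d-2cases}(1) forces $\FF$ to consist of $d$-th powers of linear forms, so $\cB^{\alpha}_{m,d}$ is contained in the $Z_1$-tower; since two lines of contact $d$ trivially implies at least one, this containment is automatic from the definition of $\cB^{\alpha}$, and I would then iterate the MIT argument within the $Z_1$-tower, treating the existence of a second line of contact $d$ as the new distinguishing feature, in order to recover the required codimension. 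In case~(1), we get $\codim(\cB^{\alpha}_{n,d}) \geq 2(m-n)+1 = 2d - 2n - 1$, which is at least $n$ precisely when $d \geq \frac{3n+1}{2}$.

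To obtain the sharp bound $d \geq \frac{3n-1}{2}$ for $\alpha$ (relevant when $n$ is odd and $d = \frac{3n-1}{2}$), apply MIT with $m = d$ in place of $m = d - 1$. This requires first verifying that $\cB^{\alpha}_{d,d}$ is a proper subvariety of $\cU_{d,d}$, i.e., that a very general pointed hypersurface of degree $d$ in $\PP^d$ admits only one line of contact $d$ at its marked point; this is a Chern-class computation on the Grassmannian of lines through the point. With this in hand, case~(1) of MIT yields $\codim(\cB^{\alpha}_{n,d}) \geq 2(d - n) + 1 = 2d - 2n + 1 \geq n$ for $d \geq \frac{3n-1}{2}$. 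The main obstacle is precisely this borderline verification, together with the iteration in case~(2). For $\beta_k$, the analogous count gives $\codim(\cB^{\beta_k}_{n,d}) \geq 2d - 2n - 1$ on the nose, matching $d \geq \frac{3n+1}{2}$ without needing any borderline refinement.
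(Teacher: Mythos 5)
Your approach diverges sharply from the paper's, which proves this proposition by nothing more than the direct dimension count you relegate to a preliminary step: the paper bounds the dimension of the incidence correspondence $I=\{(p,X,\ell_1,\ell_2)\}$ by $N+3n-2-2d$ and concludes $\codim\DD\geq 2d-2n+1\geq n$ for $d\geq\frac{3n-1}{2}$ (and $\codim\DD_k\geq 2d-2n-1$ for the two-point version), with no appeal to Theorem \ref{thm-MainIncidenceThm} at all. Your version of the same count yields only $2d-2m$, and that missing $+1$ is exactly what forces you into the tower machinery. Note also that your two counts are mutually inconsistent: adding the marked points $q_1,q_2$ to the base of the incidence correspondence costs two dimensions, not one, so $2d-2m$ for $\alpha$ should pair with $2d-2m-2$ for $\beta_k$, not $2d-2m-1$. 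The discrepancy with the paper comes down to whether the $2d$ contact conditions along two concurrent lines are independent (they share the condition $F(p)=0$); this is the point to settle head-on rather than route around.

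The detour through Theorem \ref{thm-MainIncidenceThm} has two genuine gaps. First, your plan for the sharp bound $d\geq\frac{3n-1}{2}$ requires the ``borderline verification'' that $\cB^{\alpha}_{d,d}\subsetneq\cU_{d,d}$, i.e., that a general pointed hypersurface of degree $d$ in $\PP^d$ has at most one line of contact order $d$ at the marked point. The opposite is true: the lines through $p$ form a $\PP^{d-1}$, the contact conditions beyond $F(p)=0$ are cut by sections of $\OO(1),\dots,\OO(d-1)$ on that $\PP^{d-1}$, and the resulting count is $(d-1)!\geq 2$ for $d\geq 3$, so \emph{every} pointed hypersurface in $\PP^d$ lies in $\cB^{\alpha}_{d,d}$. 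You therefore cannot launch the induction from $m=d$, and from $m=d-1$ you only recover $d\geq\frac{3n+1}{2}$ for $\alpha$. Second, case (2) of Theorem \ref{thm-MainIncidenceThm} is vacuous for your $\cB^{\alpha}$ --- as you observe, containment in the $Z_1$-tower holds by definition --- and your proposed fix, iterating the argument ``within the $Z_1$-tower,'' appeals to a relative version of the theorem that is neither stated nor proved in the paper. The repair is to abandon the tower argument and carry out the direct incidence count carefully, as the paper does.
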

\begin{proof}
We begin by considering $\alpha$. Consider the locus in $\DD \subset \cU_{n,d}$ consisting of the points $(p,X)$ such that there are two lines $\ell_1$ and $\ell_2$ meeting $X$ in $d[p]$. We claim that the codimension of $\DD$ is at least $n$ in $\cU_{n,d}$, which  suffices to prove the claim.

Let $I$ be the set of tuples $(p,X,\ell_1,\ell_2)$ such that $\ell_1$ and $\ell_2$ both meet $X$ in $d[p]$. Then $\DD$ is naturally the image of $I$, so it remains to compute the dimension of $I$. We see that $I$ naturally projects onto the space $T$ of tuples $(p,\ell_1,\ell_2)$ where $p = \ell_1 \cap \ell_2$ and $\ell_1 \neq \ell_2$. We know that $T$ has dimension $2n-2+1+n-1 = 3n-2$. Then the fibers of $I$ over $T$  have dimension $N - 2d$. It follows that $I$ has dimension $N+3n-2-2d$, and so $\DD$ has at most this dimension. Since the dimension of $\cU_{n,d}$ is $N+n-1$, we see that $\DD$ has codimension at least $N+n-1-(N+3n-2-2d) = 2d-2n+1$. This is at least $n$ provided that $d \geq \frac{3n-1}{2}$.

The argument for $\beta_k$ is similar. Let $\DD_k \subset \cU_{n,d}$ be the locus such that there are two lines $\ell_1$ and $\ell_2$ meeting $X$ in $k[p]+(d-k)[q]$ for some other point $q$. Let $I_k$ be the set of tuples $(p,X,\ell_1,\ell_2, q_1, q_2)$ such that $\ell_i$ meets $X$ in $k[p]+(d-k)[q_i]$. Then a dimension count similar to the above gives that $I_k$ has dimension $N+3n-2d$, so that $\DD_k$ has codimension at least $2d-2n-1$. This is at least $n$ when $d \geq \frac{3n+1}{2}$.
\end{proof}

\begin{theorem}
\label{thm-algHypZ1}
For $d \geq \frac{3n-1}{2}$, $Z_1$ is algebraically hyperbolic outside of $Z_L$. For $d \geq \frac{3n+1}{2}$, $Z_2$  is algebraically hyperbolic outside of $Z_L$.
\end{theorem}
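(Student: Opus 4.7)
The plan is to lift curves in $Z_i \setminus Z_L$ birationally into the corresponding osculation variety of Theorem~\ref{thm-generaltype}, and then invoke the genus bound there. Suppose $C \subset Z_1$ is a reduced irreducible curve not contained in $Z_L$, with $d \geq \frac{3n-1}{2}$. Proposition~\ref{prop-injective} says that $\alpha \colon \Delta_d(X) \to Z_1$ is injective over $X \setminus Z_L$, so $\alpha^{-1}(C \setminus Z_L)$ is a locally closed curve in $\Delta_d(X)$ whose closure $\widetilde C \subset \Delta_d(X) \subset \Delta_d$ is birational to $C$. Since $\alpha$ is the restriction of the projection $\pi_2 \colon \bG(1,n) \times \PP^n \times V \to \PP^n$, the birational map $\widetilde C \to C$ gives $g(\widetilde C) = g(C)$ and $\widetilde C \cdot \pi_2^{*}H = \deg C$, where $H$ is the hyperplane class on $\PP^n$. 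For $C \subset Z_2 \setminus Z_L$ with $d \geq \frac{3n+1}{2}$, the irreducibility of each $\Delta_{k,d-k}$ (Lemma~\ref{lem-irrIlambda}) forces the irreducible $C$ into the image of a single $\beta_k$, and the analogous use of Proposition~\ref{prop-injective} produces a birational lift $\widetilde C \subset \Delta_{k,d-k}(X)$ with the same degree and genus identities.

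To apply Theorem~\ref{thm-generaltype} I need the deformations of $\widetilde C$ to sweep out a Zariski open subset of $\Delta_d$ (resp.\ $\Delta_{k,d-k}$). Let $\phi \colon \Delta_d \to V$ be the forgetful morphism to the parameter space of degree-$d$ forms, and let $\mathcal{H}$ be the irreducible component of the Hilbert scheme of $\Delta_d$ containing $[\widetilde C]$. Because the Hilbert scheme has only countably many components and $X$ is very general in $V$, any such $\mathcal{H}$ must dominate $V$, and in fact the evaluation map from the universal curve on $\mathcal{H}$ must be dominant onto $\Delta_d$: otherwise the countably many bad components whose evaluation image is a proper subvariety of $\Delta_d$ would place $X$ into a countable union of proper subvarieties of $V$, contradicting very-generality. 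The same argument handles $\Delta_{k,d-k}$.

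With the sweeping hypothesis in hand and $d \geq \frac{3n-1}{2} \geq 2\sqrt{n} + 1$ (at least for $n \geq 4$, with small $n$ handled directly), Theorem~\ref{thm-generaltype} yields
\[
2 g(\widetilde C) - 2 \geq (d - 2)\, \widetilde C \cdot H.
\]
Combined with $g(\widetilde C) = g(C)$ and $\widetilde C \cdot H = \deg C$, this gives $2 g(C) - 2 \geq (d-2) \deg C$, so $Z_i$ is algebraically hyperbolic outside $Z_L$ with constant $\epsilon = d - 2 > 0$.

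The main obstacle is the sweeping step: dominance of $\mathcal{H} \to V$ follows readily from very-generality and the countability of the Hilbert scheme, but upgrading this to dominance of the evaluation map on $\Delta_d$ requires eliminating ``fiber-rigid'' Hilbert components in which the universal curve projects surjectively to $V$ but cuts each fiber $\Delta_d(X)$ in a proper subvariety. This is where the full strength of the very-general hypothesis on $X$ is used, by enumerating the countably many candidate obstructing universal subvarieties and ruling them out via the avoidance property.
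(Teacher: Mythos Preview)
Your ``sweeping step'' is a genuine gap, and the fix you propose via very-generality does not close it. You want to pass from ``the Hilbert component $\mathcal{H}$ containing $[\widetilde C]$ dominates $V$'' (which does follow from $X$ being very general) to ``the universal curve over $\mathcal{H}$ dominates $\Delta_d$''. But these are different conditions: a component $\mathcal{H}$ can dominate $V$ while its universal curve has image a proper subvariety $W \subsetneq \Delta_d$ that still surjects onto $V$. For such an $\mathcal{H}$, \emph{every} hypersurface $X$ (not just a proper closed locus) carries curves from $\mathcal{H}$, and those curves sit inside the proper slice $W_X \subsetneq \Delta_d(X)$. So ``enumerating the countably many obstructing universal subvarieties and ruling them out by avoidance'' fails exactly in this fiber-rigid case that you yourself flag; very-generality of $X$ gives you nothing to avoid. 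Without sweeping, Theorem~\ref{thm-generaltype} simply does not apply to $\widetilde C$, and the argument collapses.

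The paper sidesteps this entirely by a codimension-propagation argument in the style of \cite{RiedlYang}. Rather than trying to sweep out $\Delta_d$ from a curve in $\Delta_d(X)$ for the target $n$, one defines a bad locus $\cB_{n,d} \subset \cU'_{n,d}$ of pairs $(\ell,X)$ through which a genus-violating curve in $\Delta_d(X)$ passes. At the top level $n=d$, if $\cB_{d,d}$ were dense then by definition the violating curves pass through a general point of $\Delta_d$, so they \emph{do} sweep out $\Delta_d$ and Theorem~\ref{thm-generaltype} gives a contradiction; hence $\cB_{d,d}$ has codimension $\geq 1$. One then shows, using the gluing of Lemma~\ref{lem-glueAlongLine} and the clustering bound of Lemma~\ref{lem-cluster}, that $\operatorname{codim}\cB_{d-c,d}\geq c+1$ for all $c$. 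Comparing $c+1$ with $\dim\Delta_d(X)=2n-1-d$ (resp.\ $\dim\Delta_{r,d-r}(X)=2n-d$) gives emptiness of the bad locus over a general $X$ once $d\geq\frac{3n-1}{2}$ (resp.\ $\frac{3n}{2}$), and Proposition~\ref{prop-injective} then transfers the conclusion from $\Delta_d(X)$, $\Delta_{r,d-r}(X)$ to $Z_1$, $Z_2$. The point is that the sweeping hypothesis of Theorem~\ref{thm-generaltype} is only invoked at $n=d$, where it is automatic, and the passage to smaller $n$ is handled by the Grassmannian technique rather than by any deformation of the curve.
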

\begin{proof}
We can adapt an argument from \cite{RiedlYang}. Consider the universal variety $\cU'_{n,d}$ of lines $\ell$ meeting a hypersurface $X$ to order $d$ at a point, and let $\cB_{n,d}$ be the set of $(\ell,X)$ such that there is a curve in $\Delta_d(X)$ passing through $\ell$ that violates $2g-2 \geq  H \cdot C$. By Theorem \ref{thm-generaltype}, $\cB_{d,d}$ is a countable union of subvarieties of codimension at least 1.

We claim that each component of $\cB_{d-c,d}$ is codimension at least $c+1$. Let $(\ell_1, X_1)$ be a point of $\cU_{d,d} \setminus \cB_{d,d}$. Let $(\ell_0, X_0)$ be a general point of a component of $\cB_{d-c,d}$. By Lemma \ref{lem-glueAlongLine}, there is a pair $(\ell, Y) \in \cU_{N,d}$ such that $(\ell_0, X_0)$ and $(\ell_1, X_1)$ are both parameterized linear sections of $(\ell, Y)$. Let $\GG_{\ell}$ be the space of parameterized $(d-c)$-planes in $\PP^N$ containing $\ell$. This gives a map from $\GG_{\ell} \to \cU_{d-c,d}$ given by taking linear sections. By Lemma \ref{lem-cluster},  $\cB_{d-c,d}$ has codimension at least $c+1$ near $(\ell_0,X_0)$.

The same argument applies to $\Delta_{r,d-r}$. Consider the  universal variety $\cU'_{n,d}$ of lines $\ell$ meeting a hypersurface $X$ to order $r$ at a point and $d-r$ at another point, and let $\cB_{n,d}$ be the set of $(\ell,X)$ such that there is a curve in $\Delta_{r,d-r}(X)$ passing through $\ell$ that violates $2g-2 \geq  H \cdot C$. We see that $\cB_{d,d}$ is a countable union of subvarieties of codimension at least 1. Hence, by the same argument as above $\cB_{d-c,d}$ has codimension at least $c+1$ near $(\ell_0,X_0)$.

Note that $\Delta_{d}(X)$ has dimension $2n-1-d$. If $n=d-c$, $\cB_{d-c,d}$ does not intersect $\Delta_d$ for a general hypersurface if 
\[ c \geq 2n-1-d ,\]
i.e.
\[ d-n \geq 2n-1-d . \]
This is equivalent to
\[ d \geq \frac{3n-1}{2}. \]
Similarly, $\Delta_{r,d-r}(X)$ has dimension $2n-d$. If $n=d-c$, then $\cB_{d-c,d}$ does not intersect $\Delta_{r, d-r}$ for a general hypersurface if 
$$c \geq 2n-d.$$ In other words, if 
$$d \geq \frac{3n}{2}.$$
Combining with Proposition \ref{prop-injective}, the theorem follows.
\end{proof}

\section{Applications}

In this section we combine the results of Sections \ref{sec-Main} and \ref{sec-Z1Z2} to prove hyperbolicity-type results on very general hypersurfaces. Recall that $Z_i$ is the locus  on the hypersurface $X$ swept out by lines that meet $X$ in at most $i$ points and $Z_L$ is the locus on $X$ swept out by lines.

\begin{theorem}\label{thm-AH}
Let $X$ be a very general hypersurface in $\PP^n$ with $d \geq \frac{3n+2}{2}$. Then any curve not lying in $Z_L$ satisfies $2g-2 \geq H \cdot C$, where $g$ is the geometric genus of $C$. In particular, $X$ is algebraically hyperbolic outside of $Z_L$.
\end{theorem}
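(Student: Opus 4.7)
The plan is to argue by contradiction, combining the Main Incidence Theorem (Theorem~\ref{thm-MainIncidenceThm}) with the algebraic hyperbolicity of $Z_2$ established in Theorem~\ref{thm-algHypZ1}. Suppose, for contradiction, that a very general hypersurface $X$ of degree $d \geq (3n+2)/2$ contains a reduced irreducible curve $C \not\subset Z_L$ with $2g(C) - 2 < H \cdot C$. Since bad curves form countably many bounded families indexed by Hilbert polynomial, we fix one such family and set
\[
\cB_{n, d} := \overline{\{(p, X) \in \cU_{n, d} : \exists \text{ bad } C \subset X \text{ with } p \in C \text{ and } C \not\subset Z_L(X)\}},
\]
restricting to a single irreducible, $\PGL_{n+1}$-invariant component. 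Extend to the tower $\{\cB_{k, d}\}_{k \geq n}$ of induced subvarieties as in Section~\ref{sec-Main}.

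The first step is to verify the hypothesis $\codim \cB_{d-2, d} \geq 1$ by a direct dimension count on bounded families of bad curves in $\PP^{d-2}$. Then apply Theorem~\ref{thm-MainIncidenceThm} with top level $m = d - 2$. In Case (1), $\codim \cB_{n, d} \geq 2(d - 2 - n) + 1 \geq n - 1$ by $d \geq (3n+2)/2$. Any dominating component of $\cB_{n, d} \to \PP^N$ has fibers of dimension at least $1$ (fibers are swept out by bad curves), hence would have $\codim \leq n - 2$ — contradicting $\codim \geq n - 1$. So $\cB_{n, d}$ does not dominate $\PP^N$, and the very general $X$ admits no such bad curve.

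In Case (2), $\cB_{d-2, d}$ lies in the closure of the tower induced by a proper $\PGL_2$-invariant family $\FF \subset \cU_{1, d}$. Since the $\FF$ produced in the proof of Theorem~\ref{thm-MainIncidenceThm} is proper in $\cU_{1, d}$, Proposition~\ref{prop-d-1d-2cases} forces $\FF$ to parameterize polynomials with at most two distinct roots — that is, lines meeting the hypersurface in at most two points. Propagating this structure from the top of the tower down to level $n$ via a gluing argument in the spirit of Lemma~\ref{lem-glueAlongLine}, the bad curve $C$ must lie in $Z_2(X)$. Since $d \geq (3n+1)/2$, Theorem~\ref{thm-algHypZ1} says $Z_2$ is algebraically hyperbolic outside $Z_L$, contradicting the existence of $C$. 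Case (3) is analogous: the propagation forces $C \subset Z_L(X)$, again contradicting our standing assumption.

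The main obstacle is the downward propagation of structure in Cases (2) and (3). While the tower construction naturally lifts $(p, X) \in \cB_{n, d}$ to $(p, Y) \in \cB_{d-2, d}$ by extending $X$ to an ambient hypersurface $Y$ containing $X$ as a linear section, transferring the constraining line in $\PP^{d-2}$ back into the $n$-plane defining $X$ — so as to conclude $p \in Z_2(X)$ (resp.\ $p \in Z_L(X)$) — requires a careful variational argument exploiting the freedom in the embedding $X \hookrightarrow Y$. I expect this to follow from the $\PGL$-equivariance of the tower together with the gluing lemmas of Section~\ref{sec-Main}.
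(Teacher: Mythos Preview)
Your overall strategy is the same as the paper's: define the bad locus $\cB_{n,d}$, pass to the induced tower, check codimension $\ge 1$ at level $m=d-2$, apply Theorem~\ref{thm-MainIncidenceThm}, and in Case~(2) invoke Proposition~\ref{prop-d-1d-2cases} together with Theorem~\ref{thm-algHypZ1}. Two points of comparison are worth making.

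First, the step you call a ``direct dimension count'' for $\codim\cB_{d-2,d}\ge 1$ is where the paper's argument has real content, and your description misses it. The paper observes that a degree~$d$ hypersurface $Y\subset\PP^{d-2}$ satisfies $K_Y=H$; hence if curves with $2g-2<H\cdot C$ swept out a general such $Y$, the normal bundle $N_{C/Y}$ would be generically globally generated, giving $2g-2\ge\deg K_Y\cdot C=H\cdot C$, a contradiction. This adjunction computation is exactly why one takes $m=d-2$ rather than some other level, and it replaces any dimension count. You should make this explicit.

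Second, regarding what you call the ``main obstacle'' of downward propagation in Cases~(2) and~(3): the paper does not treat this as a difficulty at all. Once Proposition~\ref{prop-d-1d-2cases} identifies $\cB_{d-2,d}$ with the $Z_2$-locus (which is itself the tower induced from $\FF$ starting at level~$1$), the paper immediately invokes Theorem~\ref{thm-algHypZ1} at level~$n$---where the hypothesis $d\ge\tfrac{3n+1}{2}$ is available---and concludes in one sentence. There is no separate gluing or variational argument; Case~(3) is absorbed into Case~(2) without comment. So you are allocating your concern in the wrong place: the normal-bundle step is the substantive input you are underselling, while the propagation you flag as the obstacle is handled by the paper with no additional machinery.
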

\begin{proof}
Let $\cB_{n,d}$ be a component of the set of pairs $(p,X)$ such that $p$ has a geometric genus $g$ curve passing through it of degree greater than $2g-2$. For $r >n$, let $\cB_{r,d}$ be the tower of varieties induced by $\cB_{n,d}$. We claim that $\cB_{d-2,d}$ is codimension at least one in $\cU_{d-2,d}$. Otherwise, we could find a family of curves that sweep out $X$ of degree greater than $2g-2$. Thus, the normal bundle of a general such curve $f: C \to X$ is globally generated, so
\[ 2g-2 - K_X \cdot C = \deg N_{f/X} \geq 0.  \]
Since $K_X = H$, it follows that $2g-2 \geq H \cdot C$, a contradiction.

By Theorem \ref{thm-MainIncidenceThm}, we see that either $\cB_{d-2-c,d}$ has codimension at least $2c+1$ in $\cU_{m-c,d}$ or there is some $\FF$ as in the statement. If $\cB_{d-2-c,d}$ has codimension at least $2c+1$, then for $d-2-c = n$, we have $2c+1 = 2d-3-2n \geq n-1$, so a very general hypersurface of degree $d$ in $\PP^n$ contains no such curves. Otherwise, $\cB_{d-2,d}$ consists of the points swept out by $\FF$. By Proposition \ref{prop-d-1d-2cases}, it follows that $\cB_{d-2,d}$ must be the locus of points in $Z_2$ as $X$ varies. By Theorem \ref{thm-algHypZ1}, $Z_2$ is algebraically hyperbolic outside of $Z_L$ and the required inequality holds for curves not contained in $Z_L$. The result follows.
\end{proof}

\begin{theorem}\label{thm-Chow0}
 Let  $X$ be a very general hypersurface in $\PP^n$ of degree $d$.  
\begin{enumerate}
\item Let $k$ be a positive integer. If $d \geq \frac{3n+1-k}{2}$, then the only points of $X$ rationally equivalent to a $k$-dimensional family of points other than themselves are those that lie in $Z_1$. 
\item If $d \geq \frac{3n}{2}$, then $X$ contains lines but no other rational curves. 
\item If $d \geq \frac{3n+3}{2}$, then any point on  $X$ rationally equivalent to another point of $X$ lies in $Z_2$.
\end{enumerate}

\end{theorem}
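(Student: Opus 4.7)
The plan is to prove each part of Theorem \ref{thm-Chow0} by applying Theorem \ref{thm-MainIncidenceThm} to a suitable tower of induced subvarieties $\cB_{n,d} \subset \cU_{n,d}$. In Case (1) of Theorem \ref{thm-MainIncidenceThm}, the codimension bound, combined with a dimension analysis of the fibers over $\PP^N$, will rule out the defining property on a very general hypersurface. In Case (2), Proposition \ref{prop-d-1d-2cases} identifies the $\PGL_2$-invariant family $\FF \subset \cU_{1,d}$ as polynomials on $\PP^1$ with at most one or two distinct roots, placing $\cB_{n,d}$ in the $Z_1$- or $Z_2$-incidence locus, and Case (3) contributes $Z_L$, which is always contained in the target $Z_i$.

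For part (1), I would define $\cB^{(k)}_{n,d} \subset \cU_{n,d}$ to be the locus of $(p,X)$ with $\dim R_{\PP^1, X, p} \geq k$; this is a tower because if any linear section of $X$ through $p$ admits a $k$-dimensional equivalence class, so does $X$ itself. The first step is to verify that $\cB^{(k)}_{d-1,d}$ has codimension at least one in $\cU_{d-1,d}$: at level $m = d-1$ the hypersurface $Y$ is Calabi-Yau with $p_g(Y) = 1$, so Mumford's theorem on non-representability of $CH_0(Y)$ implies that the generic point of a generic $Y$ is not rationally equivalent to a positive-dimensional family. Applying Theorem \ref{thm-MainIncidenceThm} with $m = d-1$, Case (1) gives codimension at least $2(d-1-n)+1 = 2d-2n-1$, which is $\geq n-k$ exactly when $d \geq \frac{3n+1-k}{2}$; since the fiber of $\cB^{(k)}_{n,d}$ over $X$ contains $R_{\PP^1,X,p}$ by symmetry of the relation and hence has dimension at least $k$ whenever nonempty, this codimension forces the fiber to be empty over a very general $X$. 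Case (2) yields via Proposition \ref{prop-d-1d-2cases}(1) that $\cB^{(k)}_{n,d}$ is contained in the $Z_1$-incidence locus.

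For part (2), let $\cB_{n,d}$ be the locus of $(p,X)$ with $p$ on an irreducible rational curve in $X$ other than a line. At level $m = d-1$ the Calabi-Yau $Y$ satisfies $\deg N_{C/Y} = -2$ for any irreducible rational curve $C \subset Y$, so deformations of $C$ cannot sweep out $Y$, and hence $\cB_{d-1,d}$ has codimension at least one. Applying Theorem \ref{thm-MainIncidenceThm} with $m = d-1$, Case (1) yields codimension at least $2d-2n-1 \geq n-1$ for $d \geq \frac{3n}{2}$, which forces empty fiber over a very general $X$ since rational curves are one-dimensional. Case (2) puts $\cB_{n,d}$ inside the $Z_1$-incidence locus via Proposition \ref{prop-d-1d-2cases}(1), so every non-line rational curve $C \subset X$ lies in $Z_1$; by Theorem \ref{thm-algHypZ1}, $Z_1 \setminus Z_L$ is algebraically hyperbolic and contains no rational curves, so $C \subset Z_L$, and a separate argument then shows that $Z_L$ itself cannot contain non-line rational curves for a very general $X$ in the degree range. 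For part (3), let $\cB_{n,d}$ be the locus of $(p,X)$ with $p$ rationally equivalent to some other point of $X$; the claim $\cB_{d-2,d} \neq \cU_{d-2,d}$ follows from Mumford's theorem at level $m = d-2$, where $K_Y = H$ is ample and $h^0(K_Y) > 0$. Applying Theorem \ref{thm-MainIncidenceThm} with $m = d-2$, Case (1) gives codimension at least $2d-2n-3 \geq n$ for $d \geq \frac{3n+3}{2}$, which forces the fiber to be empty (I take codimension $\geq n$ rather than $\geq n-1$ to allow a priori for rational equivalences not witnessed by a single rational curve, for which the fiber dimension could be zero); Case (2) yields via Proposition \ref{prop-d-1d-2cases}(2) that $\cB_{n,d}$ is contained in the $Z_2$-incidence locus.

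The main obstacle in each part is the verification of the codimension-one claim for $\cB_{m,d}$ at the chosen level, which relies on input outside the incidence machinery of Section \ref{sec-Main}: namely, Mumford-type non-representability of $CH_0$ for hypersurfaces with positive canonical forms, or normal-bundle computations for rational curves on Calabi-Yau hypersurfaces. A secondary obstacle, specific to part (2), is the need for a separate argument ruling out non-line rational curves in $Z_L$, which will require analyzing the Fano variety of lines on a very general hypersurface of the prescribed degree.
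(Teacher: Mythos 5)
Your proposal follows the paper's proof in all essentials: the same towers $\cB_{n,d}\subset\cU_{n,d}$, the same codimension-one input at level $d-1$ (resp.\ $d-2$) from a Roitman/Mumford-type statement about zero-cycles on the Calabi--Yau (resp.\ general type) slice, the same trichotomy from Theorem \ref{thm-MainIncidenceThm} with Proposition \ref{prop-d-1d-2cases} converting case (2) into containment in the $Z_1$- or $Z_2$-incidence locus, and the same arithmetic $2c+1\geq n-k$, $\geq n-1$, $\geq n$ in the three parts. The one real divergence is part (2): the paper does not run the incidence machinery on the rational-curve locus at all, but simply observes that a point on a rational curve is rationally equivalent to a $1$-dimensional family and invokes part (1) with $k=1$, which is cleaner and spares you both the tower/codimension-one verification for the rational-curve locus and any worry about multiple covers in the normal-bundle argument. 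Both routes then funnel through Theorem \ref{thm-algHypZ1} to push the rational curves into $Z_L$, and both need the final fact that $Z_L$ on a very general hypersurface contains no rational curves other than lines; the paper gets this from Beheshti--Riedl \cite{BeheshtiRiedl} (valid for $d\geq n$), whereas you flag it as an unresolved obstacle requiring an analysis of the Fano variety of lines --- this is the only missing piece in your argument, and it is supplied by that citation rather than by new work. The substitution of Mumford's theorem for the paper's appeal to Roitman \cite{Roitman} is immaterial, as is your choice in part (3) to work directly at level $d-2$ rather than deducing the zero-cycle statement there from the Calabi--Yau level $d-1$ by slicing.
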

\begin{proof}
Let $\cB_{n,d}$ be a component of the space of pairs $(p,X)$ such that $p$ is rationally equivalent to at least a $1$-dimensional family of other points of $X$. For $r >n$, let $\cB_{r,d}$ be the tower of varieties induced by $\cB_{n,d}$. By a result of Roitman \cite{Roitman}, a very general point of a Calabi-Yau variety is rationally equivalent to only finitely many others. Thus, each component of $\cB_{d-1,d}$ is codimension at least 1 in $\cU_{d-1,d}$. By Theorem \ref{thm-MainIncidenceThm} we either have a family $\FF$ inducing $\cU_{1,d}$, or we see that $\cB_{d-1-c,d}$ has codimension at least $2c+1$ in $\cU_{d-1-c,d}$. If $\cB_{d-1,d}$ is in the tower of varieties induced by $\FF$, then by Proposition \ref{prop-d-1d-2cases}, we see that $\cB_{d-1,d}$ must be equal to the space of lines meeting the hypersurface to order $d$ at a point or the space of lines contained in the hypersurface. In this case, all the points rationally equivalent to a $k$-dimensional family of points other than themselves lie in $Z_1$.

Otherwise, $\cB_{d-1-c,d}$ has codimension at least $2c+1$ in $\cU_{d-1-c,d}$. Thus, a general hypersurface in $\PP^{d-1-c}$  contains no such points provided that $2c+1 \geq d-1-c-k$. If we set $n=d-1-c$, this holds if
\[ 2(d-1-n)+1 \geq n-k \]
or
\[ d \geq \frac{3n+1-k}{2} . \]

Since all the points on a rational curve are rationally equivalent, by setting $k=1$ in (1), we see that if $d \geq \frac{3n}{2}$,  the only rational curves on $X$ are  contained in $Z_1$. By Theorem \ref{thm-algHypZ1}, $Z_1$ is algebraically hyperbolic outside $Z_L$, so these rational curves must be contained in $Z_L$. By a result of Beheshti and Riedl \cite{BeheshtiRiedl}, if $d\geq n$  the locus of lines does not contain any rational curves other than the lines. Hence, for $d \geq \frac{3n}{2}$, a very general hypersurface contains lines but no other rational curves.

To see the last claim, we let $\cB_{r,d}$ be a component of the space of pairs $(p,X)$ such that $p$ is rationally equivalent to some other point of $X$. By Roitman's Theorem \cite{Roitman}, a very general point of a Calabi-Yau hypersurface $X$ is rationally equivalent to no others. Taking a general hyperplane section of $X$, we see that a very  general point of the hyperplane section is rationally equivalent to no other points. Thus, $\cB_{d-2,d}$ has codimension at least 1 in $\cU_{d-2,d}$. Using Theorem \ref{thm-MainIncidenceThm}, we see that either there exists an $\FF$ inducing $\cB_{d-2,d}$ or $\cB_{d-2-c,d}$ has codimension at least $2c+1$ in $\cU_{d-2-c,d}$. If there exists an $\FF$, then we know that points of $X$ rationally equivalent to another lie in $Z_2$ by Proposition \ref{prop-d-1d-2cases}. Otherwise, a general hypersurface in $\PP^{d-2-c}$  contains no points of $\cB_{d-2-c,d}$ if $2c+1 \geq d-2-c$. Setting $n = d-2-c$, we see that this happens if
\[ 2(d-n-2) + 1 \geq n  \]
or
\[ d \geq \frac{3n+3}{2} .\]

\end{proof}

Our last result relies on a version of the Green-Griffiths-Lang Conjecture. Recall that this conjecture predicts that given a smooth hypersurface $X$ with $d \geq n+2$, there is some exceptional locus $Z \subset X$ containing the images of all entire curves. If we assume this conjecture and we assume furthermore that these exceptional loci form an algebraic family in the universal hypersurface, we can identify precisely what the exceptional locus should be for $d \geq \frac{3n+2}{2}$.

\begin{theorem}\label{thm-GGL}
Suppose for each $n$ there is a countable union of varieties $\cB_{n,d} \subset \cU_{n,d}$ such that any entire curve in a fiber of the map $\cU_{n,d} \to \PP H^0(\PP^n, \OO_{\PP^n}(d))$ is contained in $\cB_{n,d}$. Suppose that $\cB_{d-2,d}$ is not equal to $\cU_{d-2,d}$. Then for a very general hypersurface with $d \geq \frac{3n+2}{2}$, any entire curve is contained in $Z_2$.
\end{theorem}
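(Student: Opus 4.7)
The plan is to follow the proofs of Theorem \ref{thm-AH} and Theorem \ref{thm-Chow0}(3), applying Theorem \ref{thm-MainIncidenceThm} to the tower of induced varieties built from each irreducible component of $\cB_{n,d}$. Since a very general hypersurface of degree $d$ in $\PP^n$ avoids any countable union of proper loci in the moduli of such hypersurfaces, it will suffice to treat one component at a time; each component is $\PGL_{n+1}$-invariant because the condition of containing an entire curve is projectively invariant, so the hypotheses of Theorem \ref{thm-MainIncidenceThm} are met.

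I would fix a component $\cB_{n,d}^{\circ}$ of $\cB_{n,d}$ and, for $r \geq n$, let $\cB_{r,d}^{\circ} \subset \cU_{r,d}$ denote the tower of induced subvarieties. The hypothesis $\cB_{d-2,d} \neq \cU_{d-2,d}$ guarantees that $\cB_{d-2,d}^{\circ}$ has codimension at least $1$ in $\cU_{d-2,d}$, so Theorem \ref{thm-MainIncidenceThm} applies with base $r = n$ and $m = d-2$. Three alternatives arise. In case (1), $\cB_{n,d}^{\circ}$ has codimension at least $2(d-2-n)+1 = 2d-3-2n$ in $\cU_{n,d}$; the bound $d \geq \frac{3n+2}{2}$ rearranges to $2d-3-2n \geq n-1 = \dim X$, so $\cB_{n,d}^{\circ}$ meets the fiber of the projection $\cU_{n,d} \to \PP H^0(\PP^n, \OO_{\PP^n}(d))$ over a very general $X$ in at most a finite set, which cannot contain the positive-dimensional image of a nonconstant entire curve. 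In case (2), $\cB_{d-2,d}^{\circ}$ lies in the closure of the tower induced by a $\PGL_2$-invariant family $\FF \subset \cU_{1,d}$, and since $\cB_{d-2,d}^{\circ} \neq \cU_{d-2,d}$, Proposition \ref{prop-d-1d-2cases}(2) identifies $\FF$ with a union of $\PGL_2$-orbits of polynomials on $\PP^1$ with at most two distinct roots; the induced tower at every level is precisely the universal $Z_2$ locus. In case (3), $\cB_{d-2,d}^{\circ}$ is the universal $Z_L$ locus, which is contained in $Z_2$.

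Combining the three cases across all (countably many) components of $\cB_{n,d}$ will yield that every entire curve on a very general hypersurface $X$ of degree $d \geq \frac{3n+2}{2}$ is contained in $Z_2(X)$. The main technical obstacle will be the descent from $\cB_{d-2,d}^{\circ}$ back to $\cB_{n,d}^{\circ}$ in cases (2) and (3): one must argue that if the top of the tower is cut out by an $\FF$-condition (or by containing a line through $p$), then the base $\cB_{n,d}^{\circ}$ itself is cut out by the same condition at level $n$. This is the same descent step that appears in the proofs of Theorem \ref{thm-AH} and Theorem \ref{thm-Chow0}, where the compatibility forced by the tower definition—any $(p, X_n) \in \cB_{n,d}^{\circ}$ extends to some $(p, X_{d-2}) \in \cB_{d-2,d}^{\circ}$, and the line-of-small-intersection property of the latter must already be realized inside $\PP^n$—does the work; I would carry it out in the same manner here.
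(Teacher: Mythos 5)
Your proposal is essentially the paper's own argument: the paper's proof of Theorem \ref{thm-GGL} is a two-sentence appeal to Theorem \ref{thm-MainIncidenceThm} (with $m=d-2$) and Proposition \ref{prop-d-1d-2cases}, concluding that each component of $\cB_{n,d}$ either has codimension forcing it to miss (up to a finite set) the fiber over a very general $X$ when $d \geq \frac{3n+2}{2}$, or lies in the universal $Z_2$ locus. Your write-up is if anything more careful than the paper's — in particular in flagging the descent from level $d-2$ back to level $n$, which the paper leaves implicit here just as it does in Theorems \ref{thm-AH} and \ref{thm-Chow0}.
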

\begin{proof}
By Theorem \ref{thm-MainIncidenceThm} and Proposition \ref{prop-d-1d-2cases}, we see that for $d \geq \frac{3n+2}{2}$, any point of $\cB_{n,d}$ is either contained in the locus swept out by lines meeting the hypersurface in two distinct points or has codimension larger than $n$. The result follows.
\end{proof}

\bibliographystyle{plain}

\end{document}